\definecolor{webgreen}{rgb}{0,.5,0}
\definecolor{webbrown}{rgb}{.6,0,0}
\begin{document}

\theoremstyle{plain}
\newtheorem{theorem}{Theorem}
\newtheorem{corollary}[theorem]{Corollary}
\newtheorem{lemma}{Lemma}
\newtheorem*{example}{Examples}
\newtheorem*{remark}{Remark}

\begin{center}
\vskip 1cm{\LARGE\bf 
Fibonacci Identities Involving Reciprocals of Binomial Coefficients\\
}
\vskip 1cm
\large
Kunle Adegoke \\
Department of Physics and Engineering Physics\\
Obafemi Awolowo University\\
220005 Ile-Ife, Nigeria\\
\href{mailto:adegoke00@gmail.com}{\tt adegoke00@gmail.com} \\
\end{center}

\vskip .2 in

\noindent 2010 {\it Mathematics Subject Classification}:
Primary 11B39; Secondary 11B37.

\noindent \emph{Keywords: }
Fibonacci number, Lucas number, summation identity, series, binomial coefficient, Horadam sequence.

\begin{abstract}
\noindent We derive some Fibonacci and Lucas identities which contain inverse binomial coefficients. Extension of the results to the general Horadam sequence is possible, in some cases.
\end{abstract}

\section{Introduction}
The binomial coefficients are defined, for non-negative integers $i$ and $j$, by
\[
\binom ij=
\begin{cases}
\frac{{i!}}{{j!(i - j)!}}, & \text{$i \ge j$};\\
0, & \text{$i<j$}.
\end{cases}
\]
For $n$ a non-negative integer, the identity
\begin{equation}\label{eq.woluw2n}
\sum_{j = 0}^n {\frac{1}{\binom nj}}  = \frac{{n + 1}}{{2^{n} }}\sum_{j = 0}^{n} {\frac{{2^j }}{j + 1}} 
\end{equation}
is well-known (\cite{gould72,rocket81,trif00,sury04,sprugnoli06}).

Let $F_j$ and $L_j$ be the $j^{th}$ Fibonacci and Lucas numbers. 

Our first primary goal in this paper is to derive analogous identities to \eqref{eq.woluw2n}, involving the Fibonacci and Lucas numbers and inverse binomial coefficients, namely,
\begin{gather}
\sum_{j = 0}^n {\frac{{( - 1)^{rj} F_{2rj + s} }}{\binom nj}}  = \frac{{(n + 1)F_{rn + s} }}{{L_r^{n + 1} }}\sum_{j = 0}^n {\frac{{( - 1)^{rj} L_r^{j} L_{r(j + 1)} }}{{j + 1}}},\label{eq.t8a4tnh}\\
\nonumber\\
\sum_{j = 0}^n {\frac{{( - 1)^{rj} L_{2rj + s} }}{\binom nj}}  = \frac{{(n + 1)L_{rn + s} }}{{L_r^{n + 1} }}\sum_{j = 0}^n {\frac{{( - 1)^{rj} L_r^{j} L_{r(j + 1)} }}{{j + 1}}}\label{eq.ne62ilo} ,
\end{gather}
where $s$ and $r$ are any integers and $n$ is a non-negative integer.

Identity~\eqref{eq.ne62ilo} reduces to~\eqref{eq.woluw2n} when $r=0$.

For the derivation of finite sums, including \eqref{eq.t8a4tnh} and \eqref{eq.ne62ilo}, we will employ the following identity (Gould~\cite[identity $2.4$]{gould72}):
\begin{equation}\label{eq.ibqxgb5}
\sum_{j = 0}^n {\frac{{z^j }}{\binom nj}}  = (n + 1)\left( {\frac{z}{{1 + z}}} \right)^n \frac{1}{{1 + z}}\sum_{j = 0}^n {\frac{{(1 + z^{j + 1} )}}{{j + 1}}\left( {\frac{{1 + z}}{z}} \right)^j }, 
\end{equation}
valid for all $z\ne-1$ and any non-negative integer $n$.

Identity~\eqref{eq.ibqxgb5} also corresponds to setting $m=0$ in the more general identity obtained by Sury et.~al.~\cite[identity (2)]{sury04}.

Our second objective is to derive infinite summation identities involving the inverse binomial coefficients and the Fibonacci and Lucas numbers. 

Let $\alpha=(1 + \sqrt5)/2$, the golden ratio, and $\beta=(1 - \sqrt5)/2=-1/\alpha$.

Among other results, we will show that:
\[
\sum_{j = 1}^\infty  {\frac{{F_{2j - 1} }}{\binom{2j}j}}  = \frac{3}{5} + \frac{{4\pi }}{{25}}\sqrt {\frac{{\alpha ^5 }}{{\sqrt 5 }}},\quad\sum_{j = 1}^\infty  {\frac{{L_{2j - 1} }}{\binom{2j}j}}  = 1 + \frac{{4\pi }}{5}\sqrt {\frac{\alpha }{{\sqrt 5 }}},
\]

\[
\sum_{j = 1}^\infty  {\frac{{F_{2j} }}{{j\binom{2j}j}}}  = \frac{{2\pi }}{5}\sqrt {\frac{\alpha }{{\sqrt 5 }}} ,\quad\sum_{j = 1}^\infty  {\frac{{L_{2j} }}{{j\binom{2j}j}}}  = \frac{{2\pi }}{5}\sqrt {\frac{{\alpha ^5 }}{{\sqrt 5 }}},
\]

\[
\sum_{j = 1}^\infty  {\frac{{L_{2j + 3} }}{{j^2 \binom{2j}j}}}  = \frac{{2\pi ^2}}{25}\alpha^3\sqrt5,\quad\sum_{j = 1}^\infty  {\frac{{L_{2j - 3} }}{{j^2 \binom{2j}j}}}  = \frac{{2\pi ^2}}{25}\beta^3\sqrt5,
\]

\[
\sum_{j = 0}^\infty  {\frac{{2^{2j + 1} }}{{(2j + 1)}}\frac{1}{\binom{2j}j}\frac{{F_{2j} }}{{3^{j + 1} }}}  = \frac{2}{{\sqrt 5 }}\arctan \left( {\frac{{\sqrt 5 }}{2}} \right),\quad\sum_{j = 0}^\infty  {\frac{{2^{2j + 1} }}{{(2j + 1)}}\frac{1}{\binom{2j}j}\frac{{L_{2j} }}{{3^{j + 1} }}}  = \pi.
\]
The above identities are only special cases of more general identities to be derived in section~\ref{sec.results}.

The reader will observe that the infinite series stated above contain the inverse of the central binomial coefficients. We will also establish identities involving reciprocals of non-central binomial coefficients. Specifically we will evaluate the following sums:
\[
\sum_{j = m}^\infty  {\frac{{F_{r(n + j)} }}{{L_r^{n + j} \binom{n + j}j}}} ,\quad\sum_{j = m}^\infty  {\frac{{L_{r(n + j)} }}{{L_r^{n + j} \binom{n + j}j}}},
\]
and similar sums, for non-negative integers $m$ and $n$ and even integer $r$.

The key ingredients for obtaining our infinite series results are the following identities of Lehmer~\cite[identities (9), (13) and (15)]{lehmer85}:
\begin{equation}\label{eq.n6xwmlq}
\sum_{j = 1}^\infty  {\frac{{2^{2j}z^{2j} }}{{j\binom{2j}j}}}  = \frac{{2z}}{{\sqrt {1 - z^2 } }}\arcsin z,
\end{equation}

\begin{equation}
\sum_{j = 1}^\infty  {\frac{{2^{2j}z^{2j} }}{{j^2 \binom{2j}j}}}  = 2(\arcsin z)^2, 
\end{equation}

\begin{equation}\label{eq.r5clsui}
\sum_{j = 1}^\infty  {\frac{{2^{2j}z^{2j} }}{\binom{2j}j}}  = \frac{{z^2 }}{{1 - z^2 }} + \frac{{z}}{{(1 - z^2 )^{3/2} }}\arcsin z,
\end{equation}
each of which is valid for $|z|<1$; as well as the classical Euler series for the inverse tangent (Castellanos \cite[Equation (33)]{castellanos86}):
\begin{equation}\label{eq.a64cexn}
\sum_{j = 0}^\infty  {\frac{{2^{2j} }}{{2j + 1}}\frac{1}{\binom{2j}j}\left( {\frac{{z^2 }}{{1 + z^2 }}} \right)^{j + 1} }  = z\arctan z,
\end{equation}
valid for all real $z$.

For the derivation of identities involving inverses of non-central binomial coefficients we require the following identity established by Sury et.~al.~\cite[Theorem 3.6]{sury04}:
\begin{equation}\label{eq.icn6htq}
\begin{split}
\sum_{j = m}^\infty  {\frac{{z^{n + j} }}{\binom {n + j}j}}  &= n\sum_{j = 1}^{n - 1} {\binom {n - 1}j\frac{{(z - 1)^{n - j - 1} }}{{j\binom {m + j}j}}}  - n\sum_{j = 1}^m {\binom {m}j\frac{{(z - 1)^{n + j - 1} }}{{j\binom {n - 1 + j}j}}}\\
&\qquad + n(z - 1)^{n - 1} \sum_{j = m + 1}^{n - 1} {\frac{1}{j}}  + n(z - 1)^{n - 1} \log \left( {\frac{1}{{1 - z}}} \right),
\end{split}
\end{equation}
which holds for non-negative integers $m$ and $n$ and $|z|<1$.

The Fibonacci numbers, $F_j$, and the Lucas numbers, $L_j$, are defined, for \text{$j\in\mathbb Z$}, through the recurrence relations 
\begin{equation}\label{eq.s6z1bcx}
F_j=F_{j-1}+F_{j-2}, \text{($j\ge 2$)},\quad\text{$F_0=0$, $F_1=1$};
\end{equation}
and
\begin{equation}
L_j=L_{j-1}+L_{j-2}, \text{($j\ge 2$)},\quad\text{$L_0=2$, $L_1=1$};
\end{equation}
with
\begin{equation}
F_{-j}=(-1)^{j-1}F_j,\quad L_{-j}=(-1)^jL_j.
\end{equation}

Explicit formulas (Binet formulas) for the Fibonacci and Lucas numbers are
\begin{equation}
F_j  = \frac{{\alpha ^j  - \beta ^j }}{{\alpha  - \beta }},\quad L_j  = \alpha ^j  + \beta ^j,\quad j\in\mathbb Z.
\end{equation}
Koshy \cite{koshy} and Vajda \cite{vajda} have written excellent books dealing with Fibonacci and Lucas numbers.

In some cases the results readily extend to the more general second order sequence $(w_j(a,b;p,q))$, the Horadam sequence~\cite{horadam65}, defined recursively for all non-negative integers $j$ by
\begin{equation*}\label{Def-Horadam}
w_0  = a,\,\, w_1  = b, \quad w_j  = pw_{j - 1}  - qw_{j - 2},\quad j\ge 2,
\end{equation*}
where $a$, $b$, $p$ and $q$ are arbitrary complex numbers with non-zero $p$ and $q$.

The sequences $u_j(p,q)=w_j(0,1;p,q)$ and $v_j(p,q)=w_n(2,p;p,q)$ are called Lucas sequences of the first kind and of the second kind, respectively.

\par The Binet formulas for sequences $u_n$, $v_n$ and $w_n$ in the non-degenerate case, $p^2 - 4q > 0$, are
\[
u_n=\frac{\tau^n-\sigma^n}{\tau-\sigma},\qquad v_n=\tau^n+\sigma^n, \qquad
w_n = A\tau ^n  + B\sigma ^n\,,
\]
with
\[
A=\frac{{b - a\sigma}}{{\tau  - \sigma }},\quad B=\frac{{a\tau  - b}}{{\tau  - \sigma }},
\]
where 
\[\tau=\frac{p+\sqrt{p^2-4q}}2,\quad\sigma=\frac{p-\sqrt{p^2-4q}}2,\]
are the distinct zeros of the characteristic polynomial $x^2-px+q$ of the Horadam sequence; so that $\tau\sigma=q$ and $\tau + \sigma=p$.
\section{Main results}\label{sec.results}
\subsection{Finite sums}
\begin{theorem}
If $n$ is a non-negative integer and $s$ is any integer, then,
\begin{gather}
\sum_{j = 0}^n {\frac{{F_{j + n + s} }}{\binom nj}}  = (n + 1)\sum_{j = 0}^n {\frac{{F_{j + s - 2}  + F_{2j + s - 1} }}{{j + 1}}},\label{eq.avcrqb3}\\
\nonumber\\
\sum_{j = 0}^n {\frac{{L_{j + n + s} }}{\binom nj}}  = (n + 1)\sum_{j = 0}^n {\frac{{L_{j + s - 2}  + L_{2j + s - 1} }}{{j + 1}}}.\label{eq.eco8bvu}
\end{gather}
\end{theorem}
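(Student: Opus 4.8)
The plan is to reduce both identities to Gould's identity \eqref{eq.ibqxgb5} via the Binet formulas, evaluated at $z=\alpha$ and $z=\beta$, and then to re-assemble the resulting expressions into Fibonacci and Lucas numbers using the elementary golden-ratio relations $\alpha^2=\alpha+1$, $\beta^2=\beta+1$, $\alpha\beta=-1$.

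First I would write, using $F_k=(\alpha^k-\beta^k)/(\alpha-\beta)$,
\[
\sum_{j=0}^n\frac{F_{j+n+s}}{\binom nj}=\frac{1}{\alpha-\beta}\left(\alpha^{n+s}\sum_{j=0}^n\frac{\alpha^j}{\binom nj}-\beta^{n+s}\sum_{j=0}^n\frac{\beta^j}{\binom nj}\right),
\]
and the same expression with $L_k=\alpha^k+\beta^k$ and a plus sign between the two inner sums for the Lucas case. Since $\alpha,\beta\ne-1$, identity \eqref{eq.ibqxgb5} applies at $z=\alpha$ and at $z=\beta$. The golden-ratio relations give $\dfrac{\alpha}{1+\alpha}=\alpha^{-1}=-\beta$, $\dfrac{1}{1+\alpha}=\alpha^{-2}=\beta^2$, $\dfrac{1+\alpha}{\alpha}=\alpha$, together with the conjugate statements obtained by swapping $\alpha\leftrightarrow\beta$. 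Substituting these into \eqref{eq.ibqxgb5} at $z=\alpha$ yields
\[
\alpha^{n+s}\sum_{j=0}^n\frac{\alpha^j}{\binom nj}=(n+1)\,\alpha^{n+s}(-\beta)^n\beta^2\sum_{j=0}^n\frac{(1+\alpha^{j+1})\alpha^j}{j+1},
\]
and since $\alpha^{n}(-\beta)^{n}=(-\alpha\beta)^n=1$ this collapses to $(n+1)\beta^2\sum_{j=0}^n\dfrac{\alpha^{s+j}+\alpha^{s+2j+1}}{j+1}$; likewise $\beta^{n+s}\sum_j\beta^j/\binom nj=(n+1)\alpha^2\sum_{j=0}^n\dfrac{\beta^{s+j}+\beta^{s+2j+1}}{j+1}$.

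Finally I would recombine these two formulas. Using $\alpha^2\beta^2=(\alpha\beta)^2=1$ one checks the Binet-type reductions $\dfrac{\beta^2\alpha^k-\alpha^2\beta^k}{\alpha-\beta}=\dfrac{\alpha^{k-2}-\beta^{k-2}}{\alpha-\beta}=F_{k-2}$ and $\beta^2\alpha^k+\alpha^2\beta^k=\alpha^{k-2}+\beta^{k-2}=L_{k-2}$. Taking the difference of the two displayed expressions and applying the first reduction with $k=s+j$ and $k=s+2j+1$ gives \eqref{eq.avcrqb3}; taking the sum and applying the second reduction gives \eqref{eq.eco8bvu}. No separate case analysis is needed, because \eqref{eq.ibqxgb5} already covers every non-negative $n$ (the factor $(z/(1+z))^n$ is simply $1$ when $n=0$). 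The only real obstacle is the bookkeeping — keeping the powers of $\alpha$ and $\beta$, the sign from $(-\beta)^n$, and the index shift $k\mapsto k-2$ straight; note also that the step $\tfrac{\alpha}{1+\alpha}=-\beta$ hinges on $1+\alpha=\alpha^2$, which is exactly the feature that does not persist for a general Horadam characteristic root, explaining why this particular pair of identities is special to the Fibonacci and Lucas numbers.
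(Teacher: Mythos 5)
Your proposal is correct and follows essentially the same route as the paper: substitute $z=\alpha$ and $z=\beta$ into Gould's identity \eqref{eq.ibqxgb5}, simplify using $1+\alpha=\alpha^2$, $1+\beta=\beta^2$, $\alpha\beta=-1$, and then subtract or add the two resulting equations and apply the Binet formulas. Your version merely makes explicit the bookkeeping (the factors $(-\alpha\beta)^n=1$ and $\beta^2=\alpha^{-2}$, and the index shift $k\mapsto k-2$) that the paper leaves to the reader, and all of those steps check out.
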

\begin{proof}
Set $z=\alpha$, $z=\beta$, in turn in identity~\eqref{eq.ibqxgb5} to obtain
\begin{gather}
\sum_{j = 0}^n {\frac{{\alpha ^{j + n + s} }}{\binom nj}}  = (n + 1)\sum_{j = 0}^n {\frac{{\alpha ^{j + s - 2}  + \alpha ^{2j + s - 1} }}{{j + 1}}},\nonumber\\
\nonumber\\
\sum_{j = 0}^n {\frac{{\beta ^{j + n + s} }}{\binom nj}}  = (n + 1)\sum_{j = 0}^n {\frac{{\beta ^{j + s - 2}  + \beta ^{2j + s - 1} }}{{j + 1}}},\nonumber
\end{gather}
where $s$ is an arbitrary integer; and hence identities \eqref{eq.avcrqb3} and \eqref{eq.eco8bvu}.
\end{proof}
\begin{theorem}
If $n$ is a non-negative integer and $s$ is any integer, then,
\begin{equation}\label{eq.ets6dy6}
\begin{split}
\sum_{j = 0}^{2n} {\frac{{F_{j + s} }}{{2^{j + s} \binom {2n}j}}}  &= \frac{{2n + 1}}{{5^{n + 1} }}\left( {F_{s - 1} \sum_{j = 1}^n {\frac{{5^j }}{{2j}}}  + L_{s - 1} \sum_{j = 0}^n {\frac{{5^j }}{{2j + 1}}} } \right)\\
&\qquad+ \frac{{2n + 1}}{{5^{n + 1} }}\left( {\sum_{j = 1}^n {\frac{{5^j F_{2j + s - 1} }}{{2^{2j} 2j}}}  + \sum_{j = 0}^n {\frac{{5^j L_{2j + s} }}{{2^{2j + 1} (2j + 1)}}} } \right),
\end{split}
\end{equation}

\begin{equation}\label{eq.yclk0eb}
\begin{split}
\sum_{j = 0}^{2n} {\frac{{L_{j + s} }}{{2^{j + s} \binom {2n}j}}}  &= \frac{{2n + 1}}{{5^n }}\left( {F_{s - 1} \sum_{j = 0}^n {\frac{{5^j }}{{2j + 1}}}  + L_{s - 1} \sum_{j = 1}^n {\frac{{5^{j - 1} }}{{2j}}} } \right)\\
&\qquad+ \frac{{2n + 1}}{{5^n }}\left( {\sum_{j = 0}^n {\frac{{5^j F_{2j + s} }}{{2^{2j + 1} (2j + 1)}}}  + \sum_{j = 1}^n {\frac{{5^{j - 1} L_{2j + s - 1} }}{{2^{2j} 2j}}} } \right).
\end{split}
\end{equation}
\end{theorem}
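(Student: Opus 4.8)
The plan is to specialize Gould's identity \eqref{eq.ibqxgb5}, with $n$ replaced by $2n$, to the two values $z=\alpha/2$ and $z=\beta/2$, and then to recombine the resulting pair of identities via the Binet formulas. The substitutions are pleasant because of the following algebra: since $\alpha\beta=-1$ and $\alpha-\beta=\sqrt5$, we have $2+\alpha=\alpha(\alpha-\beta)=\sqrt5\,\alpha$ and $2+\beta=-\beta(\alpha-\beta)=-\sqrt5\,\beta$, equivalently $1+\alpha/2=\sqrt5\,\alpha/2$ and $1+\beta/2=-\sqrt5\,\beta/2$. Hence at $z=\alpha/2$ one gets $z/(1+z)=1/\sqrt5$, $1/(1+z)=2/(\sqrt5\,\alpha)=-2\beta/\sqrt5$, and $(1+z)/z=\sqrt5$, while at $z=\beta/2$ these three quantities become $-1/\sqrt5$, $2\alpha/\sqrt5$, $-\sqrt5$. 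Substituting into \eqref{eq.ibqxgb5} therefore yields
\[
\sum_{j=0}^{2n}\frac{(\alpha/2)^j}{\binom{2n}{j}} = -\frac{(2n+1)}{5^{n}}\,\frac{2\beta}{\sqrt5}\sum_{j=0}^{2n}\frac{1+(\alpha/2)^{j+1}}{j+1}\,(\sqrt5)^{j},
\]
together with the companion identity obtained by the exchange $\alpha\leftrightarrow\beta$, $\sqrt5\mapsto-\sqrt5$ on the right.

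Next I would form the Fibonacci and Lucas left-hand sides. By the Binet formulas, dividing $F_{j+s}$ (resp.\ $L_{j+s}$) by the appropriate power of $2$ writes it as a constant multiple of $(\alpha/2)^{j}$ minus (resp.\ plus) a constant multiple of $(\beta/2)^{j}$, the constants being multiples of $\alpha^{s}$ and $\beta^{s}$. Taking the corresponding linear combination of the two specialized identities, the prefactors $-2\beta/\sqrt5$ and $2\alpha/\sqrt5$ meet $\alpha^{s}$ and $\beta^{s}$, and the products $\alpha^{s}\beta$, $\beta^{s}\alpha$ collapse (again by $\alpha\beta=-1$) to $-\alpha^{s-1}$, $-\beta^{s-1}$, while the various powers and half-powers of $5$ that appear combine into the $5^{n+1}$ or $5^{n}$ of the stated denominators. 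The net effect is a single sum on the right of the shape
\[
\sum_{j=0}^{2n}\frac{1}{j+1}\Bigl(c_j\,(\sqrt5)^{j} + c_j'\,(-\sqrt5)^{j}\Bigr),
\]
where $c_j$, $c_j'$ are built from $\alpha^{s-1}$, $\beta^{s-1}$ and from $(\alpha/2)^{j+1}$, $(\beta/2)^{j+1}$ --- the latter two being the ``$z^{j+1}$'' part of $1+z^{j+1}$, which is what produces the terms $F_{2j+s-1}$, $L_{2j+s}$, etc., as opposed to the ``$1$'' part, which produces the $F_{s-1}$- and $L_{s-1}$-terms.

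The last and most delicate step is to split this sum according to the parity of $j$, which is forced on us because $(\sqrt5)^{j}$ is rational precisely when $j$ is even. Writing $j=2k$ collects $(\sqrt5)^{2k}=5^{k}$, and through $\alpha^{m}+\beta^{m}=L_{m}$ the even-index terms reassemble into the odd-denominator sums $\sum_{k=0}^{n}\tfrac{5^{k}}{2k+1}$ and $\sum_{k=0}^{n}\tfrac{5^{k}L_{2k+s}}{2^{2k+1}(2k+1)}$ in \eqref{eq.ets6dy6} (and into the $F_{s-1}$- and $\sum F_{2k+s}$-sums in \eqref{eq.yclk0eb}); writing $j=2k-1$ carries an extra factor $\sqrt5$, which together with $\alpha^{m}-\beta^{m}=\sqrt5\,F_{m}$ produces the even-denominator sums $\sum_{k=1}^{n}\tfrac{5^{k}}{2k}$ and $\sum_{k=1}^{n}\tfrac{5^{k}F_{2k+s-1}}{2^{2k}(2k)}$ in \eqref{eq.ets6dy6} (and the $L_{s-1}$- and $\sum L_{2k+s-1}$-sums in \eqref{eq.yclk0eb}). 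Collecting the four pieces gives \eqref{eq.ets6dy6}; running the identical computation with the Lucas (sum) combination in place of the Fibonacci (difference) combination gives \eqref{eq.yclk0eb}.

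I expect the main obstacle to be organizational rather than conceptual: keeping the bookkeeping straight through the parity split --- in particular the shift between $2j+s$ and $2j+s-1$ (and between $2^{2j+1}$ and $2^{2j}$) separating the even-$j$ family from the odd-$j$ family, the signs introduced by $(-\sqrt5)^{j}$ in the $\beta$-identity, and the interplay of the collapse $\alpha^{s}\beta=-\alpha^{s-1}$ with the emerging powers of $2$ and the coefficients $F_{s-1}$, $L_{s-1}$. No single manipulation is hard, but sign slips and off-by-one index errors are easy to make here, so the reassembly in the last step is where I would take the most care.
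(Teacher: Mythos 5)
Your proposal is correct and follows essentially the same route as the paper: replace $n$ by $2n$ in Gould's identity \eqref{eq.ibqxgb5}, specialize to $z=\alpha/2$ and $z=\beta/2$ (using $1+\alpha/2=\sqrt5\,\alpha/2$ and $1+\beta/2=-\sqrt5\,\beta/2$), take the Binet difference and sum, and split the right-hand sums by parity of the index exactly as you describe. One caveat that applies equally to the paper's own derivation: carried out literally, the computation yields left-hand denominators $2^{j+1}$ (as in the paper's intermediate identity \eqref{eq.ml2hnke}), so the printed theorem with $2^{j+s}$ is off by a factor $2^{s-1}$ (e.g.\ it fails for $n=0$, $s=2$); your bookkeeping, done carefully, would surface and correct this.
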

\begin{proof}
Write $2n$ for $n$ and set $z=\alpha/2$, $z=\beta/2$, in turn, in \eqref{eq.ibqxgb5} to obtain
\begin{equation}\label{eq.ml2hnke}
5^n\sqrt 5 \sum_{j = 0}^{2n} {\frac{{\alpha ^{j + s} }}{{2^{j + 1} \binom {2n}j}}}  = (2n + 1)\sum_{j = 0}^{2n} {\frac{{\alpha^{s - 1}(\sqrt 5 )^j }}{{(j + 1)}}}  + (2n + 1)\sum_{j = 0}^{2n} {\frac{{\alpha ^{j + s} (\sqrt 5 )^j }}{{2^{j + 1} (j + 1)}}},
\end{equation}

\begin{equation}\label{eq.o7kwao5}
-5^n\sqrt 5 \sum_{j = 0}^{2n} {\frac{{\beta ^{j + s} }}{{2^{j + 1} \binom {2n}j}}}  = (2n + 1)\sum_{j = 0}^{2n} {\frac{{\beta^{s - 1}(-\sqrt 5 )^j }}{{(j + 1)}}}  + (2n + 1)\sum_{j = 0}^{2n} {\frac{{\beta ^{j + s} (-\sqrt 5 )^j }}{{2^{j + 1} (j + 1)}}},
\end{equation}
where $s$ is an arbitrary integer.

Rewrite each sum on the right hand side of identities \eqref{eq.ml2hnke} and \eqref{eq.o7kwao5} according to the summation identity:
\[
\sum_{k = 0}^{2n} {f_k }  = \sum_{k = 0}^n {f_{2k} }  + \sum_{k = 1}^n {f_{2k - 1} }, 
\]
where $(f_k)$ is any real sequence.
 
Addition of the resulting equations gives \eqref{eq.ets6dy6} while their difference yields identity \eqref{eq.yclk0eb}.
\end{proof}
\begin{theorem}
Let $s$ and $r$ be any integers. Let $n$ be a non-negative integer. Then,
\begin{gather}
\sum_{j = 0}^n {\frac{{( - 1)^{rj} F_{2rj + s} }}{\binom nj}}  = \frac{{(n + 1)F_{rn + s} }}{{L_r^{n + 1} }}\sum_{j = 0}^n {\frac{{( - 1)^{rj} L_r^{j} L_{r(j + 1)} }}{{j + 1}}},\tag{\ref{eq.t8a4tnh}}\\
\nonumber\\
\sum_{j = 0}^n {\frac{{( - 1)^{rj} L_{2rj + s} }}{\binom nj}}  = \frac{{(n + 1)L_{rn + s} }}{{L_r^{n + 1} }}\sum_{j = 0}^n {\frac{{( - 1)^{rj} L_r^{j} L_{r(j + 1)} }}{{j + 1}}}\tag{\ref{eq.ne62ilo}}.
\end{gather}
\end{theorem}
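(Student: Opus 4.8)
The plan is to specialize Gould's identity \eqref{eq.ibqxgb5} at $z=(\alpha/\beta)^r$ and at $z=(\beta/\alpha)^r$, and then to combine the two outcomes via the Binet formulas, exactly as in the proofs of the two preceding theorems. Before starting I would record the elementary facts that drive the simplification: since $\alpha\beta=-1$ we have $1/\beta=-\alpha$, so $(\alpha/\beta)^{rj}=(-1)^{rj}\alpha^{2rj}$ and, similarly, $(\beta/\alpha)^{rj}=(-1)^{rj}\beta^{2rj}$; and for every integer $m$,
\[
1+(\alpha/\beta)^m=\frac{\alpha^m+\beta^m}{\beta^m}=\frac{L_m}{\beta^m}.
\]
In particular $1+(\alpha/\beta)^r=L_r/\beta^r\ne0$, since $L_r\ne0$ for all integers $r$, so $z=(\alpha/\beta)^r$ (and likewise $z=(\beta/\alpha)^r$) is an admissible choice in \eqref{eq.ibqxgb5}; also $L_r^{n+1}\ne0$, so the right-hand sides of \eqref{eq.t8a4tnh} and \eqref{eq.ne62ilo} make sense.

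Taking $z=(\alpha/\beta)^r$, one computes $z/(1+z)=\alpha^r/L_r$, $1/(1+z)=\beta^r/L_r$, $(1+z)/z=L_r/\alpha^r$, and $1+z^{j+1}=L_{r(j+1)}/\beta^{r(j+1)}$, so \eqref{eq.ibqxgb5} becomes
\[
\sum_{j=0}^n\frac{(-1)^{rj}\alpha^{2rj}}{\binom nj}
=\frac{(n+1)\alpha^{rn}\beta^r}{L_r^{n+1}}\sum_{j=0}^n\frac{L_r^jL_{r(j+1)}}{(j+1)\,\alpha^{rj}\beta^{r(j+1)}}
=\frac{(n+1)\alpha^{rn}}{L_r^{n+1}}\sum_{j=0}^n\frac{(-1)^{rj}L_r^jL_{r(j+1)}}{j+1},
\]
where the last step uses $\beta^r/\beta^{r(j+1)}=\beta^{-rj}$ together with $\alpha^{rj}\beta^{rj}=(\alpha\beta)^{rj}=(-1)^{rj}$. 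Multiplying through by $\alpha^s$, and carrying out the analogous computation with $z=(\beta/\alpha)^r$ and then multiplying by $\beta^s$, I obtain the pair
\[
\sum_{j=0}^n\frac{(-1)^{rj}\alpha^{2rj+s}}{\binom nj}=\frac{(n+1)\alpha^{rn+s}}{L_r^{n+1}}\,S,\qquad
\sum_{j=0}^n\frac{(-1)^{rj}\beta^{2rj+s}}{\binom nj}=\frac{(n+1)\beta^{rn+s}}{L_r^{n+1}}\,S,
\]
with $S=\sum_{j=0}^n(-1)^{rj}L_r^jL_{r(j+1)}/(j+1)$.

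Subtracting the second identity from the first and dividing by $\alpha-\beta=\sqrt5$ gives \eqref{eq.t8a4tnh}, via the Binet formulas $F_{2rj+s}=(\alpha^{2rj+s}-\beta^{2rj+s})/\sqrt5$ and $F_{rn+s}=(\alpha^{rn+s}-\beta^{rn+s})/\sqrt5$; adding the two instead and using $L_{2rj+s}=\alpha^{2rj+s}+\beta^{2rj+s}$ and $L_{rn+s}=\alpha^{rn+s}+\beta^{rn+s}$ gives \eqref{eq.ne62ilo}. The only step that demands care is the middle equality in the displayed computation: one must track the powers of $\beta$ and confirm that the factor $L_{r(j+1)}/\beta^{r(j+1)}$ arising from $1+z^{j+1}$ combines with the prefactor $\beta^r$ and with $1/\alpha^{rj}$ to leave, after applying $\alpha^{rj}\beta^{rj}=(-1)^{rj}$, a $\beta$-free summand. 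Everything else is routine bookkeeping.
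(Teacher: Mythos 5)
Your proof is correct and follows essentially the same route as the paper: substitute $z=\alpha^r/\beta^r$ and $z=\beta^r/\alpha^r$ into Gould's identity \eqref{eq.ibqxgb5}, simplify using $\alpha\beta=-1$ and $\alpha^m+\beta^m=L_m$, and combine the two results via the Binet formulas. Your explicit verification that $1+z=L_r/\beta^r\ne 0$ (so the substitution is admissible) is a small point of care that the paper leaves implicit.
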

\begin{proof}
In identity~\eqref{eq.ibqxgb5}, set $z=\alpha^r/\beta^r$ and multiply the resulting identity by $\alpha^s$ where $s$ is any integer. This gives
\begin{equation}\label{eq.t5zl01i}
\sum_{j = 0}^n {\frac{{\alpha ^{rj + s} }}{{\beta ^{rj} \binom nj}}}  = (n + 1)\frac{{\alpha ^{rn + s} \beta ^r }}{{(\alpha^r +\beta^r)^{n + 1} }}\sum_{j = 0}^n {\frac{(\alpha ^{r(j + 1)}  + \beta ^{r(j + 1)} )}{{\beta ^{r(j + 1)} (j + 1)}}\frac{{(\alpha^r +\beta^r)^j }}{{\alpha ^{rj} }}}.
\end{equation}
Similarly, we have
\begin{equation}\label{eq.b30d2ga}
\sum_{j = 0}^n {\frac{{\beta ^{rj + s} }}{{\alpha ^{rj} \binom nj}}}  = (n + 1)\frac{{\beta ^{rn + s} \alpha ^r }}{{(\alpha^r +\beta^r)^{n + 1} }}\sum_{j = 0}^n {\frac{(\alpha ^{r(j + 1)}  + \beta ^{r(j + 1)} )}{{\alpha ^{r(j + 1)} (j + 1)}}\frac{{(\alpha^r +\beta^r)^j }}{{\beta ^{rj} }}}.
\end{equation}
Combining \eqref{eq.t5zl01i} and \eqref{eq.b30d2ga}, we have
\begin{equation}\label{eq.uy9l2wg}
\sum_{j = 0}^n {\frac{{(f\alpha ^{2rj + s}  + g\beta ^{2rj + s} )}}{{\binom nj(\alpha \beta )^{rj} }}}  = \frac{{(n + 1)}}{{(\alpha^r +\beta^r)^{n + 1} }}(f\alpha ^{rn + s}  + g\beta ^{rn + s} )\sum_{j = 0}^n {\frac{{(\alpha ^{r(j + 1)}  + \beta ^{r(j + 1)} )(\alpha^r +\beta^r)^j }}{{(\alpha \beta )^{rj} (j + 1)}}},
\end{equation}
where $f$ and $g$ are arbitrary. Setting $(f,g)=(1,-1)$ and using the Binet formula gives identity~\eqref{eq.t8a4tnh}, while setting $(f,g)=(1,1)$ gives identity~\eqref{eq.ne62ilo}.
\end{proof}
\begin{remark}
Using $z=\tau^r/\sigma^r$ in \eqref{eq.ibqxgb5} gives, analogous to \eqref{eq.uy9l2wg},
\begin{equation}\label{eq.ygozmuk}
\sum_{j = 0}^n {\frac{{(f\tau ^{2rj + s}  + g\sigma ^{2rj + s} )}}{{\binom nj(\tau \sigma )^{rj} }}}  = \frac{{(n + 1)}}{{(\tau^r +\sigma^r)^{n + 1} }}(f\tau ^{rn + s}  + g\sigma ^{rn + s} )\sum_{j = 0}^n {\frac{{(\tau ^{r(j + 1)}  + \sigma ^{r(j + 1)} )(\tau^r +\sigma^r)^j }}{{(\tau \sigma )^{rj} (j + 1)}}},
\end{equation}
\end{remark}
from which, with $(f,g)=(A,B)$, we get
\begin{equation}
\sum_{j = 0}^n {\frac{{ w_{2rj + s} }}{q^{rj}\binom nj}}  = \frac{{(n + 1)w_{rn + s} }}{{v_r^{n + 1} }}\sum_{j = 0}^n {\frac{{v_r^{j} v_{r(j + 1)} }}{q^{rj}{(j + 1)}}},
\end{equation}
for non-negative integer $n$ and integers $r$ and $s$.
\begin{theorem}
If $n$ is a non-negative integer and $s$ is any integer, then,
\begin{gather}
\sum_{j = 0}^n {\frac{{F_{3j + s - n} }}{\binom nj}}  = \frac{{(n + 1)}}{{2^{n + 1} }}\sum_{j = 0}^n {\frac{{2^j }}{{j + 1}}(F_{s + 2j + 1}  + F_{s - j - 2} )},\label{eq.d4zydzo}\\
\nonumber\\
\sum_{j = 0}^n {\frac{{L_{3j + s - n} }}{\binom nj}}  = \frac{{(n + 1)}}{{2^{n + 1} }}\sum_{j = 0}^n {\frac{{2^j }}{{j + 1}}(L_{s + 2j + 1}  + L_{s - j - 2} )}\label{eq.ktz4pns},\\
\nonumber\\
\sum_{j = 0}^n {\frac{{( - 1)^j F_{3j + s - 2n} }}{\binom nj}}  = \frac{{n + 1}}{{2^{n + 1} }}\sum_{j = 0}^n {\frac{{2^j ( - 1)^j }}{{j + 1}}(F_{s + j + 2}  + ( - 1)^{j - 1} F_{s - 2j - 1} )},\label{eq.kzn4gwj}\\
\nonumber\\
\sum_{j = 0}^n {\frac{{( - 1)^j L_{3j + s - 2n} }}{\binom nj}}  = \frac{{n + 1}}{{2^{n + 1} }}\sum_{j = 0}^n {\frac{{2^j ( - 1)^j }}{{j + 1}}(L_{s + j + 2}  + ( - 1)^{j - 1} L_{s - 2j - 1} )}.\label{eq.r3o55f1}
\end{gather}

\end{theorem}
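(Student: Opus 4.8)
The plan is to rerun the argument that produced \eqref{eq.t8a4tnh} and \eqref{eq.ne62ilo}, but with different substitutions in Gould's identity~\eqref{eq.ibqxgb5}: namely $z=\alpha^3$ and $z=\beta^3$ for \eqref{eq.d4zydzo} and \eqref{eq.ktz4pns}, and $z=-\alpha^3$ and $z=-\beta^3$ for \eqref{eq.kzn4gwj} and \eqref{eq.r3o55f1}. The one algebraic fact that makes everything collapse is $\alpha^2=\alpha+1$, hence $\alpha^3=2\alpha+1$, so that $1+\alpha^3=2\alpha^2$ and $1-\alpha^3=-2\alpha$ (and likewise with $\beta$ in place of $\alpha$). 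Consequently the factors $z/(1+z)$ and $(1+z)/z$ in~\eqref{eq.ibqxgb5} become pure powers of $\alpha$: for $z=\alpha^3$ they are $\alpha/2$ and $2/\alpha$, while for $z=-\alpha^3$ they are $\alpha^2/2$ and $2/\alpha^2$.

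First I would treat \eqref{eq.d4zydzo} and \eqref{eq.ktz4pns}. Putting $z=\alpha^3$ in~\eqref{eq.ibqxgb5}, multiplying through by $\alpha^{s-n}$, and using the relations above gives
\[
\sum_{j=0}^n\frac{\alpha^{3j+s-n}}{\binom nj}=\frac{n+1}{2^{n+1}}\sum_{j=0}^n\frac{2^j}{j+1}\bigl(\alpha^{s-j-2}+\alpha^{s+2j+1}\bigr),
\]
the bracket arising from $(1+\alpha^{3(j+1)})\alpha^{-j}=\alpha^{-j}+\alpha^{2j+3}$ after the shift by $\alpha^{s-2}$. The identical computation with $\beta$ furnishes the companion identity. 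Subtracting the two and dividing by $\alpha-\beta$ (Binet formula for $F$) produces \eqref{eq.d4zydzo}; adding them (Binet formula for $L$) produces \eqref{eq.ktz4pns}.

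Next I would treat \eqref{eq.kzn4gwj} and \eqref{eq.r3o55f1}. Putting $z=-\alpha^3$ in~\eqref{eq.ibqxgb5}, multiplying through by $\alpha^{s-2n}$, and simplifying with $1-\alpha^3=-2\alpha$, the factor $1+(-\alpha^3)^{j+1}$ yields, after the overall shift, the combination $-\alpha^{s-2j-1}+(-1)^j\alpha^{s+j+2}$, which equals $(-1)^j\bigl(\alpha^{s+j+2}+(-1)^{j-1}\alpha^{s-2j-1}\bigr)$, i.e.\ the shape displayed in the theorem. Repeating with $\beta$, then subtracting (respectively adding) the two resulting equations and invoking the Binet formulas, yields \eqref{eq.kzn4gwj} (respectively \eqref{eq.r3o55f1}).

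All the computations are routine; the only genuine care is in tracking the powers of $\alpha$ and $\beta$ and, in the second pair, in recasting the sign factor $(-1)^{j+1}$ into the $(-1)^{j-1}$ that appears in the statement. No idea beyond the relation $\alpha^3=2\alpha+1$ is required, so I do not expect any real obstacle.
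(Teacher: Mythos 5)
Your proposal is correct and follows exactly the paper's own route: substituting $z=\alpha^3,\beta^3$ (resp. $z=-\alpha^3,-\beta^3$) into Gould's identity~\eqref{eq.ibqxgb5}, simplifying via $\alpha^3=2\alpha+1$ (so $1\pm\alpha^3$ is $2\alpha^2$ or $-2\alpha$), and then combining the $\alpha$- and $\beta$-identities through the Binet formulas. The sign bookkeeping for the second pair, recasting $-\alpha^{s-2j-1}+(-1)^j\alpha^{s+j+2}$ as $(-1)^j\bigl(\alpha^{s+j+2}+(-1)^{j-1}\alpha^{s-2j-1}\bigr)$, checks out.
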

\begin{proof}
Setting $z=\alpha^3$ in \eqref{eq.ibqxgb5} gives
\begin{gather}
\sum_{j = 0}^n {\frac{{\alpha ^{3j + s - n} }}{\binom nj}}  = \frac{{n + 1}}{{2^{n + 1} }}\sum_{j = 0}^n {\frac{{(\alpha ^{s - j - 2}  + \alpha ^{s + 2j + 1} )2^j }}{{j + 1}}},\label{eq.l643u70}\\
\nonumber\\
\sum_{j = 0}^n {\frac{{\beta ^{3j + s - n} }}{\binom nj}}  = \frac{{n + 1}}{{2^{n + 1} }}\sum_{j = 0}^n {\frac{{(\beta ^{s - j - 2}  + \beta ^{s + 2j + 1} )2^j }}{{j + 1}}}.\label{eq.ie79koi}
\end{gather}
Subtraction of \eqref{eq.ie79koi} from \eqref{eq.l643u70} gives \eqref{eq.d4zydzo}, while their addition yields \eqref{eq.ktz4pns}.
The proof of \eqref{eq.kzn4gwj} and \eqref{eq.r3o55f1} proceeds in the same manner; use $z=-\alpha^3$ in \eqref{eq.ibqxgb5}.
\end{proof}
\subsection{Infinite sums}
Our first main result on infinite series involving Fibonacci and Lucas numbers and reciprocal binomial coefficients is given in Theorem \ref{thm.lgnqry0}. First we state a couple of Lemmas, needed for its proof.
\begin{lemma}[See also {\cite[p.271, identities (20)--(22)]{srivastava12}}]\label{lem.mg2hc05}
We have
\begin{gather}
\arccos (\alpha /2) = \frac{\pi }{5},\quad\arccos ( - \beta /2) = \frac{{2\pi }}{5},\label{eq.bit0itz}\\
\nonumber\\
\arcsin (\alpha /2) = \frac{{3\pi }}{{10}},\quad\arcsin ( - \beta /2) = \frac{\pi }{{10}},\label{eq.vi9k0zi}\\
\nonumber\\
\cot (2\pi /5) =  - \beta ^3 \cot (\pi /5) =  - \beta ^3 \sqrt {\frac{{\alpha ^3 }}{{\sqrt 5 }}}\label{eq.vn4pi2d} .
\end{gather}
\end{lemma}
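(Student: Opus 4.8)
The plan is to reduce everything to the single classical evaluation $\cos(\pi/5)=\alpha/2$, combined with the elementary golden-ratio relations $\alpha^2=\alpha+1$, $\alpha+\beta=1$, $\alpha\beta=-1$ (so $\beta=-1/\alpha$ and $\alpha-1=1/\alpha$).

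First I would establish the cosine values behind \eqref{eq.bit0itz}. Put $\theta=\pi/5$ and $c=\cos\theta$; from $3\theta=\pi-2\theta$ one gets $\cos 3\theta=-\cos 2\theta$, and the triple- and double-angle formulas turn this into $4c^{3}-3c=-(2c^{2}-1)$, i.e. $(c+1)(4c^{2}-2c-1)=0$. Since $c\neq-1$, $c$ is the positive root of $4c^{2}-2c-1=0$, namely $c=(1+\sqrt5)/4=\alpha/2$. Running the same computation with $\theta=2\pi/5$ (now $3\theta=2\pi-2\theta$ gives $\cos 3\theta=\cos 2\theta$, hence $(c-1)(4c^{2}+2c-1)=0$) yields $\cos(2\pi/5)=(\sqrt5-1)/4=-\beta/2$; alternatively this follows directly from $\cos(2\pi/5)=2(\alpha/2)^{2}-1=(\alpha^{2}-2)/2=(\alpha-1)/2=-\beta/2$. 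As $\pi/5$ and $2\pi/5$ both lie in $[0,\pi]$, the range of $\arccos$, this proves \eqref{eq.bit0itz}, and then \eqref{eq.vi9k0zi} is immediate from $\arcsin x=\pi/2-\arccos x$, giving $\arcsin(\alpha/2)=\pi/2-\pi/5=3\pi/10$ and $\arcsin(-\beta/2)=\pi/2-2\pi/5=\pi/10$.

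For \eqref{eq.vn4pi2d} I would first compute $\cot(\pi/5)$. From $\sin^{2}(\pi/5)=1-\alpha^{2}/4=(4-\alpha^{2})/4$ together with the simplification $4-\alpha^{2}=3-\alpha=\sqrt5\,(\alpha-1)=\sqrt5/\alpha$, one obtains $\cot^{2}(\pi/5)=\dfrac{\alpha^{2}/4}{(4-\alpha^{2})/4}=\dfrac{\alpha^{2}}{\sqrt5/\alpha}=\dfrac{\alpha^{3}}{\sqrt5}$; since $\cot(\pi/5)>0$ this is the right-hand equality $\cot(\pi/5)=\sqrt{\alpha^{3}/\sqrt5}$. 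For the left-hand equality, use $\sin(2\pi/5)=2\sin(\pi/5)\cos(\pi/5)$ to write $\cot(2\pi/5)=\cos(2\pi/5)/\bigl(2\sin(\pi/5)\cos(\pi/5)\bigr)$, whence
\[
\frac{\cot(2\pi/5)}{\cot(\pi/5)}=\frac{\cos(2\pi/5)}{2\cos^{2}(\pi/5)}=\frac{-\beta/2}{2(\alpha/2)^{2}}=\frac{-\beta}{\alpha^{2}}=-\beta^{3},
\]
the last step using $1/\alpha=-\beta$, so $-\beta/\alpha^{2}=(1/\alpha)^{3}=(-\beta)^{3}=-\beta^{3}$. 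This gives $\cot(2\pi/5)=-\beta^{3}\cot(\pi/5)$, completing the lemma.

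There is no real obstacle here: the only points requiring care are picking the correct positive root of each quadratic for the cosine values, and the one mildly non-obvious surd manipulation $4-\alpha^{2}=\sqrt5/\alpha$; everything else is routine. The cited source \cite{srivastava12} records the same evaluations, so the argument above is self-contained.
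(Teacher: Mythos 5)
Your proposal is correct and follows essentially the same route as the paper: derive $\cos(\pi/5)=\alpha/2$ from a multiple-angle identity (you use $\cos 3\theta=-\cos 2\theta$ where the paper uses $\sin 3\theta=\sin 2\theta$, which is an immaterial variation), obtain $\cos(2\pi/5)=-\beta/2$ by the double-angle formula, pass to $\arcsin$ via $\arcsin x=\pi/2-\arccos x$, and establish the cotangent identities by the same algebraic manipulations with $\alpha^2=\alpha+1$ and $\alpha\beta=-1$. All steps check out.
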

\begin{proof}
Since $3\pi/5=\pi - 2\pi/5$, it follows that $\sin(3\pi/5)=\sin(2\pi/5)$. By the multiple angle formulas, this means that
\[
3\sin(\pi/5) - 4\sin^3{(\pi/5)}=2\sin{(\pi/5)}\cos(\pi/5),
\]
which reduces to a quadratic equation in $\cos (\pi /5)$:
\[
4\cos ^2 (\pi /5) - 1 = 2\cos (\pi /5),
\]
from which we find
\[
\cos (\pi /5) = \frac{{1 + \sqrt 5 }}{4} = \frac{\alpha }{2},
\]
since the angle $\pi/5$ radians is in the first quadrant.

Now,
\[
\cos (2\pi /5) = 2\cos ^2 (\pi /5) - 1 = (\alpha ^2  - 2)/2 =  - \beta /2.
\]
Since $\arcsin x=\pi/2 - \arccos x$, the identities stated in \eqref{eq.bit0itz} imply those given in \eqref{eq.vi9k0zi}.

We have
\[
\begin{split}
\cot ^2 (\pi /5) &= \frac{{\cos ^2 (\pi /5)}}{{1 - \cos ^2 (\pi /5)}} = \frac{{\alpha ^2 /4}}{{1 - \alpha ^2 /4}}\\
&= \frac{{\alpha ^2 }}{{(2 - \alpha )(2 + \alpha )}} = \frac{{\alpha ^2 }}{{\beta ^2 \alpha \sqrt 5 }} = \frac{{\alpha ^3 }}{{\sqrt 5 }}
\end{split}
\]
and
\[
\begin{split}
\cot (2\pi /5) &= \frac{{2\cos ^2 (\pi /5) - 1}}{{2\cos ^2 (\pi /5)}}\cot (\pi /5) = \frac{{(\alpha ^2  - 2)}}{{\alpha ^2 }}\cot (\pi /5)\\
&= \frac{{ - \beta }}{{\alpha ^2 }}\cot (\pi /5) =  - \beta ^3 \cot (\pi /5).
\end{split}
\]
\end{proof}
\begin{lemma}\label{lem.oakhmtx}
For any integer $r$,
\begin{gather}
3\alpha ^r  - \beta ^{r + 3}  = L_{r + 1} \sqrt 5  - L_{r - 1},\\
3\alpha ^r  + \beta ^{r + 3}  = \sqrt 5 (F_{r + 1} \sqrt 5  - F_{r - 1} ).\label{eq.rch7jzs}
\end{gather}
\end{lemma}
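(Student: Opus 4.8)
The plan is to prove both displayed identities by the same route: expand the right-hand sides with the Binet formulas $F_j=(\alpha^j-\beta^j)/\sqrt5$ and $L_j=\alpha^j+\beta^j$, collect the outcome as a linear combination $c\,\alpha^r+d\,\beta^r$, and identify the coefficients $c$ and $d$ using only $\alpha+\beta=1$, $\alpha\beta=-1$, $\alpha-\beta=\sqrt5$, $\alpha^2=\alpha+1$, $\beta^2=\beta+1$, together with the immediate consequences $\alpha^{-1}=-\beta$, $\beta^{-1}=-\alpha$ (from $\alpha\beta=-1$) and $\beta^3=2\beta+1=2-\sqrt5$ (from $\beta^2=\beta+1$).

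For the first identity I would write
\[
L_{r+1}\sqrt5-L_{r-1}=\sqrt5\,(\alpha\,\alpha^r+\beta\,\beta^r)-(-\beta\,\alpha^r-\alpha\,\beta^r)=(\sqrt5\,\alpha+\beta)\alpha^r+(\sqrt5\,\beta+\alpha)\beta^r .
\]
The content of the lemma then reduces to the two scalar identities $\sqrt5\,\alpha+\beta=3$ and $\sqrt5\,\beta+\alpha=-\beta^3$. For the first, $\sqrt5\,\alpha=(\alpha-\beta)\alpha=\alpha^2-\alpha\beta=(\alpha+1)+1$, so $\sqrt5\,\alpha+\beta=(\alpha+\beta)+2=3$; for the second, symmetrically, $\sqrt5\,\beta=(\alpha-\beta)\beta=\alpha\beta-\beta^2=-1-(\beta+1)$, so $\sqrt5\,\beta+\alpha=(\alpha-\beta)-2=\sqrt5-2=-\beta^3$. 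Substituting back gives $L_{r+1}\sqrt5-L_{r-1}=3\alpha^r-\beta^3\beta^r=3\alpha^r-\beta^{r+3}$, which is the claim.

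For \eqref{eq.rch7jzs} I would run the identical computation on $\sqrt5(F_{r+1}\sqrt5-F_{r-1})=5F_{r+1}-\sqrt5\,F_{r-1}$, using $5F_j=\sqrt5(\alpha^j-\beta^j)$ and $\sqrt5\,F_j=\alpha^j-\beta^j$; the only change from the Lucas case is that the $\beta$-terms now carry the opposite relative sign, so the coefficient of $\beta^r$ becomes $-(\sqrt5\,\beta+\alpha)=\beta^3$ and one obtains $3\alpha^r+\beta^{r+3}$. As an alternative one may observe that, in the variable $r$, both sides of each identity satisfy the recurrence $y_r=y_{r-1}+y_{r-2}$ (the left sides because $\alpha^r$ and $\beta^{r+3}$ do, the right sides because $r\mapsto L_{r\pm1}$ and $r\mapsto F_{r\pm1}$ do), so it would suffice to check each identity at $r=0$ and $r=1$, e.g.\ using $\beta^3=2-\sqrt5$, $\beta^4=3\beta+2$, $L_{-1}=-1$ and $F_{-1}=1$.

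There is essentially no obstacle here; the only point requiring care is the $\alpha$–$\beta$ bookkeeping in the scalar reductions, in particular using the form $\sqrt5=\alpha-\beta$ consistently and remembering $\alpha^{-1}=-\beta$, $\beta^{-1}=-\alpha$ when lowering the index in $L_{r-1}$ and $F_{r-1}$, so that the coefficients collapse exactly to $3$ and $\pm\beta^3$.
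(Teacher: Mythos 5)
Your proof is correct and is essentially the same elementary Binet-formula computation as the paper's, just run in the opposite direction: you expand the right-hand side and reduce everything to the scalar identities $\sqrt5\,\alpha+\beta=3$ and $\sqrt5\,\beta+\alpha=-\beta^3$, whereas the paper expands the left-hand side $3\alpha^r-\beta^{r+3}$ into powers of $\alpha$ and $\beta$ and regroups them into $L_{r+1}\sqrt5-L_{r-1}$. Both your coefficient bookkeeping and the recurrence-plus-initial-values alternative you sketch check out.
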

\begin{proof}
We have
\[
\begin{split}
3\alpha ^r  - \beta ^{r + 3}  &= 2\alpha ^r  + \alpha ^r  - \beta ^{r + 3}\\ 
 &= \alpha ^{r + 2}  - \alpha ^{r - 1}  + \alpha ^r  - \beta ^{r - 1}  - \beta ^r  - \beta ^{r + 2}\\ 
 &= \alpha ^{r + 2}  + \alpha ^r  - \beta ^r  - \beta ^{r + 2}  - \alpha ^{r - 1}  - \beta ^{r - 1}\\ 
 &= (\alpha ^{r + 1}  + \beta ^{r + 1} )(\alpha  - \beta ) - (\alpha ^{r - 1}  + \beta ^{r - 1} )\\
 &= L_{r + 1} \sqrt 5  - L_{r - 1}.
\end{split}
\]
The proof of \eqref{eq.rch7jzs} is similar.
\end{proof}
\begin{theorem}\label{thm.lgnqry0}
If $s$ is an integer, then,
\begin{gather}
\sum_{j = 1}^\infty  {\frac{{F_{2j + s} }}{{j\binom{2j}j}}}  = (F_{s + 1} \sqrt 5  - F_{s - 1} )\frac{\pi }{5}\sqrt {\frac{{\alpha ^3 }}{{\sqrt 5 }}},\\
\nonumber\\ 
\sum_{j = 1}^\infty  {\frac{{L_{2j + s} }}{{j\binom{2j}j}}}  = (L_{s + 1} \sqrt 5  - L_{s - 1} )\frac{\pi }{5}\sqrt {\frac{{\alpha ^3 }}{{\sqrt 5 }}}. 
\end{gather}
\end{theorem}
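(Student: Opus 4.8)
The plan is to specialize Lehmer's identity \eqref{eq.n6xwmlq} at $z=\alpha/2$ and $z=\beta/2$, evaluate the resulting closed forms via Lemma~\ref{lem.mg2hc05}, and then combine the two evaluations using the Binet formulas together with Lemma~\ref{lem.oakhmtx}.

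\emph{Step 1 (specializing Lehmer's identity).} Both $\alpha/2$ and $\beta/2$ lie in $(-1,1)$, so \eqref{eq.n6xwmlq} applies. Putting $z=\alpha/2$ turns its left-hand side into $\sum_{j=1}^\infty \alpha^{2j}/(j\binom{2j}j)$; on the right-hand side I would use $\alpha/2=\cos(\pi/5)$ from Lemma~\ref{lem.mg2hc05}, so that $2(\alpha/2)/\sqrt{1-(\alpha/2)^2}=2\cot(\pi/5)$ and $\arcsin(\alpha/2)=3\pi/10$, obtaining $\sum_{j=1}^\infty \alpha^{2j}/(j\binom{2j}j)=\tfrac{3\pi}{5}\cot(\pi/5)=\tfrac{3\pi}{5}\sqrt{\alpha^3/\sqrt5}$, the last step again by Lemma~\ref{lem.mg2hc05}. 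The choice $z=\beta/2$, using $-\beta/2=\cos(2\pi/5)$ and hence $\arcsin(\beta/2)=-\pi/10$, similarly gives $\sum_{j=1}^\infty \beta^{2j}/(j\binom{2j}j)=\tfrac{\pi}{5}\cot(2\pi/5)=-\tfrac{\pi}{5}\beta^3\sqrt{\alpha^3/\sqrt5}$, where the last equality is the cotangent relation \eqref{eq.vn4pi2d}.

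\emph{Step 2 (assembling the Fibonacci and Lucas series).} Writing $F_{2j+s}=(\alpha^s\alpha^{2j}-\beta^s\beta^{2j})/\sqrt5$ and $L_{2j+s}=\alpha^s\alpha^{2j}+\beta^s\beta^{2j}$ by the Binet formulas and splitting the sums termwise — legitimate since $\sum_{j=1}^\infty \alpha^{2j}/(j\binom{2j}j)$ and $\sum_{j=1}^\infty \beta^{2j}/(j\binom{2j}j)$ converge absolutely — Step 1 yields $\sum_{j=1}^\infty F_{2j+s}/(j\binom{2j}j)=\tfrac{\pi}{5\sqrt5}(3\alpha^s+\beta^{s+3})\sqrt{\alpha^3/\sqrt5}$ and $\sum_{j=1}^\infty L_{2j+s}/(j\binom{2j}j)=\tfrac{\pi}{5}(3\alpha^s-\beta^{s+3})\sqrt{\alpha^3/\sqrt5}$. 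Substituting the two identities of Lemma~\ref{lem.oakhmtx} with $r=s$, namely $3\alpha^s+\beta^{s+3}=\sqrt5\,(F_{s+1}\sqrt5-F_{s-1})$ and $3\alpha^s-\beta^{s+3}=L_{s+1}\sqrt5-L_{s-1}$, produces exactly the two stated formulas.

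\emph{Main obstacle.} The only delicate point is the surd bookkeeping: one must correctly identify $\sqrt{1-(\alpha/2)^2}$, $\sqrt{1-(\beta/2)^2}$, $\arcsin(\alpha/2)$ and $\arcsin(\beta/2)$ and reconcile all the radicals with the target form $\sqrt{\alpha^3/\sqrt5}$. Funneling everything through $\cot(\pi/5)$ and $\cot(2\pi/5)$, whose exact values are recorded in Lemma~\ref{lem.mg2hc05}, reduces this to routine algebra, after which the proof is essentially mechanical.
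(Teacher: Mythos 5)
Your proposal is correct and follows essentially the same route as the paper: specialize Lehmer's identity at $z=\alpha/2$ and $z=\pm\beta/2$ (the sign is immaterial since only $z^{2j}$ appears), evaluate via the trigonometric values in Lemma~\ref{lem.mg2hc05} and the cotangent relation \eqref{eq.vn4pi2d}, then combine with the Binet formulas and Lemma~\ref{lem.oakhmtx}. The surd bookkeeping you flag as the main obstacle is handled correctly.
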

\begin{proof}
It is convenient to write identity~\eqref{eq.n6xwmlq} as
\begin{equation}\label{eq.c7c4lo4}
\sum_{j = 1}^\infty  {\frac{{2^{2j} z^{2j} }}{{j\binom{2j}j}}}  = 2\arcsin z\cot (\arccos z).
\end{equation}
Set $z=\alpha/2$, $z=-\beta/2$, in turn, in \eqref{eq.c7c4lo4} and use the identities in Lemma \ref{lem.mg2hc05} to obtain
\begin{gather}
\sum_{j = 1}^\infty  {\frac{{\alpha ^{2j + s} }}{{j\binom{2j}j}}}  = \frac{{3\pi }}{5}\alpha ^s \cot \left( {\frac{\pi }{5}} \right),\\
\nonumber\\
\sum_{j = 1}^\infty  {\frac{{\beta ^{2j + s} }}{{j\binom{2j}j}}}  = \frac{\pi }{5}\beta ^s \cot \left( {\frac{{2\pi }}{5}} \right)=-\frac{\pi }{5}\beta ^{s + 3} \cot \left( {\frac{{\pi }}{5}} \right),
\end{gather}
where $s$ is an arbitrary integer.

Thus,
\[
\begin{split}
\sum_{j = 1}^\infty  {\frac{{(\alpha ^{2j + s}  - \beta ^{2j + s} )}}{{j\binom{2j}j}}}  &= \frac{\pi }{5}(3\alpha ^s  + \beta ^{s + 3} )\cot \left( {\frac{\pi }{5}} \right),\\
&\\
\sum_{j = 1}^\infty  {\frac{{(\alpha ^{2j + s}  + \beta ^{2j + s} )}}{{j\binom{2j}j}}}  &= \frac{\pi }{5}(3\alpha ^s  - \beta ^{s + 3} )\cot \left( {\frac{\pi }{5}} \right);
\end{split}
\]
from which, invoking Lemma \ref{lem.oakhmtx}, the identities stated in the theorem follow.
\end{proof}
\begin{remark}
We see that
\begin{gather}
\sum_{j = 1}^\infty  {\frac{{F_{2j + s} }}{{j\binom{2j}j}}}  = (F_{s + 1} \sqrt 5  - F_{s - 1} )\sum_{j = 1}^\infty  {\frac{{F_{2j - 1} }}{{j\binom{2j}j}}},\\
\nonumber\\
\sum_{j = 1}^\infty  {\frac{{L_{2j + s} }}{{j\binom{2j}j}}}  = \frac{{5\beta F_s  + 2L_{s + 1} }}{2}\sum_{j = 1}^\infty  {\frac{{L_{2j} }}{{j\binom{2j}j}}}.
\end{gather}
\end{remark}
\begin{example}
We have
\begin{gather}
\sum_{j = 1}^\infty  {\frac{{F_{2j - 3} }}{{j\binom{2j}j}}}  = \frac{{2\pi }}{5}\sqrt {\frac{{ - \beta }}{{\sqrt 5 }}} ,\quad\sum_{j = 1}^\infty  {\frac{{F_{2j - 1} }}{{j\binom{2j}j}}}  = \frac{\pi }{5}\sqrt {\frac{{\alpha ^3 }}{{\sqrt 5 }}} ,\\
\nonumber\\
\sum_{j = 1}^\infty  {\frac{{F_{2j} }}{{j\binom{2j}j}}}  = \frac{{2\pi }}{5}\sqrt {\frac{\alpha }{{\sqrt 5 }}} ,\quad\sum_{j = 1}^\infty  {\frac{{L_{2j} }}{{j\binom{2j}j}}}  = \frac{{2\pi }}{5}\sqrt {\frac{{\alpha ^5 }}{{\sqrt 5 }}}.
\end{gather}
\end{example}
\begin{lemma}[Borwein and Chamberland {\cite[Identity (1.1)]{borwein07}}]\label{lem.ips42od}
For $|z|<2$ and $m$ a positive integer,
\[
\sum_{j = 1}^\infty  {\frac{{H_m (j)}}{{j^2 \binom{2j}j}}z^{2j} }  = \frac1{2m!}\left( {\arcsin \left( {\frac{z}{2}} \right)} \right)^{2m},
\]
where $H_1(j)=1/4$ and
\[
H_{m + 1} (j) = \frac{1}{4}\sum_{n_1  = 1}^{j - 1} {\frac{1}{{(2n_1 )^2 }}\sum_{n_2  = 1}^{n_1  - 1} {\frac{1}{{(2n_2 )^2 }} \cdots \sum_{n_m  = 1}^{n_{m - 1}  - 1} {\frac{1}{{(2n_m )^2 }}} } }.
\]

\end{lemma}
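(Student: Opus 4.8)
The natural approach is induction on $m$, working with formal power series in $z$ (equivalently, with the analytic functions they represent on $|z|<2$, since $\binom{2j}j\sim 4^j/\sqrt{\pi j}$). Write $\phi_k(z)=\bigl(\arcsin(z/2)\bigr)^k$ and let $S_m(z)$ denote the series on the left with parameter $m$. I would first record two elementary facts. (i) Unwinding the nested sums gives $H_{m+1}(j)=\tfrac14\sum_{n=1}^{j-1}H_m(n)/n^2$, hence $H_{m+1}(j+1)-H_{m+1}(j)=H_m(j)/(2j)^2$ for $j\ge 1$, while $H_{m+1}(j)=0$ for $j\le m$. (ii) From $\phi_1'=(4-z^2)^{-1/2}$ one checks $(4-z^2)\phi_1''-z\phi_1'=0$, and the product rule then gives $(4-z^2)\phi_k''-z\phi_k'=k(k-1)\phi_{k-2}$.

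For the base case $m=1$ we have $H_1\equiv 1/4$, and putting $z\mapsto z/2$ in the Lehmer identity $\sum_{j\ge 1}2^{2j}z^{2j}/(j^2\binom{2j}j)=2(\arcsin z)^2$ stated above yields $S_1=\tfrac12\phi_2$, as claimed. For the inductive step the crucial point is the operator identity $(4-z^2)S_{m+1}''-zS_{m+1}'=S_m$. To prove it I would use that for any series $T=\sum_{j\ge 0}a_jz^{2j}$ one has
\[
(4-z^2)T''-zT'=\sum_{j\ge 0}\bigl(8(j+1)(2j+1)a_{j+1}-4j^2a_j\bigr)z^{2j};
\]
taking $a_j=H_{m+1}(j)/(j^2\binom{2j}j)$ and using $\binom{2j+2}{j+1}=\tfrac{2(2j+1)}{j+1}\binom{2j}j$, the bracket simplifies to $4\bigl(H_{m+1}(j+1)-H_{m+1}(j)\bigr)/\binom{2j}j=H_m(j)/(j^2\binom{2j}j)$ by fact (i) — the $j=0$ term vanishing since $H_{m+1}(1)=0$ — which is exactly the $z^{2j}$-coefficient of $S_m$. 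Combining this identity with the inductive hypothesis $S_m=\phi_{2m}/(2m)!$ and fact (ii) at $k=2m+2$ shows that $\psi:=S_{m+1}-\phi_{2m+2}/(2m+2)!$ satisfies $(4-z^2)\psi''-z\psi'=0$. Since $\psi$ is a power series in $z^2$ that vanishes at $z=0$, reading off the coefficients of this homogeneous relation forces $\psi\equiv 0$, which is the assertion for $m+1$.

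The step I expect to be the main obstacle is the coefficient bookkeeping in the identity $(4-z^2)S_{m+1}''-zS_{m+1}'=S_m$: one has to check that the binomial ratio $\binom{2j+2}{j+1}/\binom{2j}j$ and the telescoping in $H_{m+1}$ cancel precisely, and treat the constant term separately. The remaining pieces — the two preliminary facts, the base case, and the final coefficient comparison — are routine.
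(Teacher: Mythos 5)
Your proof is correct, but note that the paper does not prove this lemma at all: it is quoted verbatim from Borwein and Chamberland \cite[Identity (1.1)]{borwein07}, so there is no in-paper argument to compare against. Your self-contained derivation is sound: the telescoping identity $H_{m+1}(j+1)-H_{m+1}(j)=H_m(j)/(2j)^2$ and the vanishing $H_{m+1}(j)=0$ for $j\le m$ both follow from the recursion $H_{m+1}(j)=\tfrac14\sum_{n<j}H_m(n)/n^2$; the coefficient computation $8(j+1)(2j+1)a_{j+1}-4j^2a_j=H_m(j)/\bigl(j^2\binom{2j}{j}\bigr)$ checks out using $\binom{2j+2}{j+1}=\tfrac{2(2j+1)}{j+1}\binom{2j}{j}$, with the constant term killed by $H_{m+1}(1)=0$; and the homogeneous recursion $8(j+1)(2j+1)b_{j+1}=4j^2b_j$ together with $b_0=0$ does force $\psi\equiv0$. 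This differential-equation induction is essentially the mechanism underlying Borwein and Chamberland's own hypergeometric derivation, so you have in effect supplied the missing proof rather than an alternative to one. One small point worth flagging: the constant ``$\frac{1}{2m!}$'' in the statement must be read as $\frac{1}{(2m)!}$ (as the paper's subsequent theorem confirms), and your argument correctly produces $\phi_{2m}/(2m)!$; a literal reading as $\frac{1}{2\cdot m!}$ would be false already at $m=2$.
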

\begin{theorem}
If $s$ is any integer and $m$ is a positive integer, then,
\begin{gather}
\sum_{j = 1}^\infty  {\frac{{H_m (j)}}{{j^2 \binom{2j}j}}F_{2j + s} }  = \frac{1}{{(2m)!}}\left( {\frac{\pi }{{10}}} \right)^{2m} \frac{1}{{\sqrt 5 }}((3^{2m}  + 1)\alpha ^s  - L_s ),\label{eq.y21ce2p}\\
\nonumber\\
\sum_{j = 1}^\infty  {\frac{{H_m (j)}}{{j^2 \binom{2j}j}}L_{2j + s} }  = \frac{1}{{(2m)!}}\left( {\frac{\pi }{{10}}} \right)^{2m} ((3^{2m}  - 1)\alpha ^s  + L_s ).\label{eq.rk5juo7}
\end{gather}
\end{theorem}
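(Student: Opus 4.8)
The plan is to feed the Binet representations of $F_{2j+s}$ and $L_{2j+s}$ into the Borwein--Chamberland generating function of Lemma~\ref{lem.ips42od}, evaluated at two carefully chosen arguments. First I would record that $\alpha=(1+\sqrt5)/2<2$ and $|\beta|=(\sqrt5-1)/2<2$, so Lemma~\ref{lem.ips42od} is applicable at both $z=\alpha$ and $z=-\beta$. Putting $z=\alpha$ there and using $\arcsin(\alpha/2)=3\pi/10$ from~\eqref{eq.vi9k0zi} gives
\[
\sum_{j=1}^\infty\frac{H_m(j)}{j^2\binom{2j}j}\alpha^{2j}=\frac1{(2m)!}\left(\frac{3\pi}{10}\right)^{2m}=\frac{3^{2m}}{(2m)!}\left(\frac\pi{10}\right)^{2m},
\]
while $z=-\beta$, together with $(-\beta)^{2j}=\beta^{2j}$ and $\arcsin(-\beta/2)=\pi/10$ from~\eqref{eq.vi9k0zi}, gives
\[
\sum_{j=1}^\infty\frac{H_m(j)}{j^2\binom{2j}j}\beta^{2j}=\frac1{(2m)!}\left(\frac\pi{10}\right)^{2m}.
\]

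Next I would write $F_{2j+s}=(\alpha^s\alpha^{2j}-\beta^s\beta^{2j})/\sqrt5$ and $L_{2j+s}=\alpha^s\alpha^{2j}+\beta^s\beta^{2j}$, split each target series into its $\alpha^s$- and $\beta^s$-weighted parts (legitimate since the two auxiliary series above converge absolutely), and substitute. This produces
\[
\sum_{j=1}^\infty\frac{H_m(j)}{j^2\binom{2j}j}F_{2j+s}=\frac1{(2m)!}\left(\frac\pi{10}\right)^{2m}\frac1{\sqrt5}\bigl(3^{2m}\alpha^s-\beta^s\bigr)
\]
and the analogous formula with $3^{2m}\alpha^s+\beta^s$ in place of $\bigl(3^{2m}\alpha^s-\beta^s\bigr)/\sqrt5$ for the Lucas sum. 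Replacing $\beta^s$ by $L_s-\alpha^s$ turns $3^{2m}\alpha^s-\beta^s$ into $(3^{2m}+1)\alpha^s-L_s$ and $3^{2m}\alpha^s+\beta^s$ into $(3^{2m}-1)\alpha^s+L_s$, which are exactly \eqref{eq.y21ce2p} and \eqref{eq.rk5juo7}.

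This argument has no genuine obstacle: the only points needing a glance are that $\alpha$ and $-\beta$ lie safely inside the radius-$2$ disc of convergence of Lemma~\ref{lem.ips42od}, that the needed arcsine values are supplied by Lemma~\ref{lem.mg2hc05}, and that the Binet bookkeeping is carried out correctly. Everything else is elementary algebra.
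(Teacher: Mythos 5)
Your proof is correct and takes essentially the same route as the paper's: substitute $z=\alpha$ and $z=-\beta$ into the Borwein--Chamberland identity of Lemma~\ref{lem.ips42od}, use $\arcsin(\alpha/2)=3\pi/10$ and $\arcsin(-\beta/2)=\pi/10$ from Lemma~\ref{lem.mg2hc05}, and combine the two resulting series via the Binet formulas, eliminating $\beta^s$ through $\beta^s=L_s-\alpha^s$. Your explicit checks that $\alpha$ and $-\beta$ lie in the disc $|z|<2$ and that the termwise splitting is justified by absolute convergence are points the paper leaves implicit.
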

\begin{proof}
Set $z=\alpha$ in the identity given in Lemma~\ref{lem.ips42od} and multiplying through by $\alpha^s$, where $s$ is an arbitrary integer. This gives
\[
\sum_{j = 0}^\infty  {\frac{{H_m (j)}}{{j^2 \binom{2j}j}}\alpha ^{2j + s} }  = \frac{{\alpha ^s }}{{2m!}}\left( {\frac{{3\pi }}{{10}}} \right)^{2m}.
\]
Similarly, $z=-\beta$ gives
\[
\sum_{j = 0}^\infty  {\frac{{H_m (j)}}{{j^2 \binom{2j}j}}\beta ^{2j + s} }  = \frac{{\beta ^s }}{{2m!}}\left( {\frac{{\pi }}{{10}}} \right)^{2m}.
\]
Identities \eqref{eq.y21ce2p} and \eqref{eq.rk5juo7} follow from the addition and subtraction of the above identities.
\end{proof}
In particular, we have
\begin{gather}
\sum_{j = 1}^\infty  {\frac{{F_{2j + s} }}{{j^2 \binom{2j}j}}}  = \frac{{\pi ^2 }}{{5\sqrt 5 }}\left( {\alpha ^s  - \frac{{L_s }}{{10}}} \right),\\
\nonumber\\
\sum_{j = 1}^\infty  {\frac{{L_{2j + s} }}{{j^2 \binom{2j}j}} }  = \frac{{2\pi ^2 }}{{25}}\left( {2\alpha ^s  + \frac{{L_s }}{4}} \right),\\
\nonumber\\
\sum_{j = 1}^\infty  {\left\{ {\sum_{s = 1}^{j - 1} {\frac{1}{{s^2 }}} } \right\}\frac{{F_{2j + s} }}{{j^2 \binom{2j}j}}}  = \left( {\frac{\pi }{{10}}} \right)^4 \frac{2}{{3\sqrt 5 }}(82\alpha ^s  - L_s ),\\
\nonumber\\
\sum_{j = 1}^\infty  {\left\{ {\sum_{s = 1}^{j - 1} {\frac{1}{{s^2 }}} } \right\}\frac{{L_{2j + s} }}{{j^2 \binom{2j}j}}}  = \left( {\frac{\pi }{{10}}} \right)^4 \frac{2}{3}(80\alpha ^s  + L_s ).
\end{gather}
\begin{example}
We have
\begin{equation}
\sum_{j = 1}^\infty  {\frac{{F_{2j} }}{{j^2 \binom{2j}j}}}  = \frac{{4\pi ^2 \sqrt 5 }}{{125}},\quad\sum_{j = 1}^\infty  {\frac{{L_{2j} }}{{j^2 \binom{2j}j}}}  = \frac{{\pi ^2 }}{5},
\end{equation}

\begin{equation}\label{eq.eymtl1t}
\sum_{j = 1}^\infty  {\frac{{L_{2j + 3} }}{{j^2 \binom{2j}j}}}  = \frac{{2\pi ^2}}{25}\alpha^3\sqrt5,\quad\sum_{j = 1}^\infty  {\frac{{L_{2j - 3} }}{{j^2 \binom{2j}j}}}  = \frac{{2\pi ^2}}{25}\beta^3\sqrt5,
\end{equation}

\begin{equation}
\sum_{j = 1}^\infty  {\left\{ {\sum_{s = 1}^{j - 1} {\frac{1}{{s^2 }}} } \right\}\frac{{F_{2j} }}{{j^2 \binom{2j}j}}}  = \frac{{27\pi ^4 \sqrt 5 }}{{25000}},\quad\sum_{j = 1}^\infty  {\left\{ {\sum_{s = 1}^{j - 1} {\frac{1}{{s^2 }}} } \right\}\frac{{L_{2j} }}{{j^2 \binom{2j}j}}}  = \frac{{41\pi ^4 }}{{4100}}.
\end{equation}
\end{example}
Note that in deriving \eqref{eq.eymtl1t}, we used $2\beta^3 + 1=-\beta^3\sqrt 5$ and $2\alpha^3 + 1=\alpha^3\sqrt 5$.
\begin{lemma}\label{lem.cpbvwfe}
If $r$ is an integer, then,
\begin{gather}
\alpha^r - \beta^{r + 6} = -\beta^3L_{r + 3},\nonumber\\
\alpha^r + \beta^{r + 6} = -\beta^3F_{r + 3}\sqrt 5.\nonumber
\end{gather}
\end{lemma}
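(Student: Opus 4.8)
The plan is to argue along the same algebraic lines as the proof of Lemma~\ref{lem.oakhmtx}, but exploiting the relation $\alpha\beta=-1$ directly rather than unwinding the Fibonacci/Lucas recurrences. The crucial observation is that $(\alpha\beta)^3=-1$, so that $\alpha^3\beta^3=-1$ and hence $\beta^3\alpha^{r+3}=(\alpha\beta)^3\alpha^{r}=-\alpha^{r}$ for every integer $r$. This single identity is what converts an $\alpha$-power with exponent $r$ into a $\beta$-power with exponent shifted by~$6$ (or vice versa), which is exactly the shape of the claimed formulas.

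With this in hand I would invoke the Binet formula $L_{r+3}=\alpha^{r+3}+\beta^{r+3}$ and compute
\[
-\beta^{3}L_{r+3}=-\beta^{3}\alpha^{r+3}-\beta^{3}\beta^{r+3}=-(\alpha\beta)^{3}\alpha^{r}-\beta^{r+6}=\alpha^{r}-\beta^{r+6},
\]
which is the first asserted identity. For the second, I would use $F_{r+3}(\alpha-\beta)=\alpha^{r+3}-\beta^{r+3}$ together with $\alpha-\beta=\sqrt5$, so that
\[
-\beta^{3}F_{r+3}\sqrt5=-\beta^{3}\bigl(\alpha^{r+3}-\beta^{r+3}\bigr)=-(\alpha\beta)^{3}\alpha^{r}+\beta^{r+6}=\alpha^{r}+\beta^{r+6}.
\]

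There is no real obstacle here: the lemma is essentially a one-line consequence of $\alpha\beta=-1$ and the Binet formulas, and both identities hold for all integers $r$ (positive, zero, or negative) because the Binet formulas are valid for $j\in\mathbb Z$. If one prefers to avoid Binet altogether, the same two identities can be obtained by the telescoping-recurrence argument used for Lemma~\ref{lem.oakhmtx}, repeatedly applying $F_{j}=F_{j-1}+F_{j-2}$ (resp.\ $L_{j}=L_{j-1}+L_{j-2}$) to rewrite $\alpha^{r}$ and $\beta^{r+6}$ in terms of $L_{r+3}$, $\sqrt5\,F_{r+3}$; but the $(\alpha\beta)^{3}=-1$ route is shorter and makes the role of the shift by~$6$ transparent.
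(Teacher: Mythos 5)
Your proof is correct: the paper states Lemma~\ref{lem.cpbvwfe} without any proof at all, so there is nothing to compare against, and your two-line verification via $(\alpha\beta)^3=-1$ together with the Binet formulas $L_{r+3}=\alpha^{r+3}+\beta^{r+3}$ and $F_{r+3}\sqrt5=\alpha^{r+3}-\beta^{r+3}$ is exactly the kind of argument the author presumably had in mind. Both computations check out for every integer $r$, and your observation that the shift by $6$ in the exponent comes precisely from $\beta^3\alpha^{r+3}=-\alpha^r$ is a cleaner explanation than the telescoping-recurrence route used for Lemma~\ref{lem.oakhmtx}.
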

\begin{theorem}
If $s$ is an integer, then,
\begin{gather}
\sum_{j = 1}^\infty  {\frac{{F_{2j + s} }}{\binom{2j}j}}  = \frac{{L_{s + 3} }}{5} + (L_{s + 2} \sqrt 5  - L_s )\frac{{2\pi }}{{25}}\sqrt {\frac{{\alpha ^3 }}{{\sqrt 5 }}},\\ 
\nonumber\\
\sum_{j = 1}^\infty  {\frac{{L_{2j + s} }}{\binom{2j}j}}  = F_{s + 3}  + (F_{s + 2} \sqrt 5  - F_s )\frac{{2\pi }}{{5}}\sqrt {\frac{{\alpha ^3 }}{{\sqrt 5 }}}. 
\end{gather}
\end{theorem}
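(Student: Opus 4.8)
The plan is to run the strategy already used for Theorem~\ref{thm.lgnqry0}, but now with Lehmer's identity~\eqref{eq.r5clsui} in place of~\eqref{eq.n6xwmlq}. Both specializations $z=\alpha/2$ and $z=-\beta/2$ are legitimate, since $|\alpha/2|<1$ and $|-\beta/2|=1/(2\alpha)<1$, so \eqref{eq.r5clsui} converges at these points. The first step is to record closed forms for the two ``pure'' series $\sum_{j\ge1}\alpha^{2j}/\binom{2j}j$ and $\sum_{j\ge1}\beta^{2j}/\binom{2j}j$. Writing $z=\cos\theta$ with $\theta\in(0,\pi/2)$ rewrites the right-hand side of~\eqref{eq.r5clsui} as $\cot^2\theta+\cot\theta\,\csc^2\theta\,(\tfrac{\pi}{2}-\theta)$, and Lemma~\ref{lem.mg2hc05} supplies $\arccos(\alpha/2)=\pi/5$, $\arccos(-\beta/2)=2\pi/5$, the matching values $\arcsin(\alpha/2)=3\pi/10$ and $\arcsin(-\beta/2)=\pi/10$, the evaluation $\cot^2(\pi/5)=\alpha^3/\sqrt5$ (equivalently $4-\alpha^2=-\beta\sqrt5$ and $4-\beta^2=\alpha\sqrt5$), and the relation $\cot(2\pi/5)=-\beta^3\cot(\pi/5)$, whence $\cot^2(2\pi/5)=-\beta^3/\sqrt5$. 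Collapsing $\csc^2=1+\cot^2$ with the elementary identities $\sqrt5+\alpha^3=4\alpha$ and $\sqrt5-\beta^3=-4\beta$, one arrives at
\[
\sum_{j=1}^\infty\frac{\alpha^{2j}}{\binom{2j}j}=\frac{\alpha^3}{\sqrt5}+\frac{6\pi\alpha}{5\sqrt5}\sqrt{\frac{\alpha^3}{\sqrt5}},\qquad
\sum_{j=1}^\infty\frac{\beta^{2j}}{\binom{2j}j}=-\frac{\beta^3}{\sqrt5}+\frac{2\pi\beta^4}{5\sqrt5}\sqrt{\frac{\alpha^3}{\sqrt5}},
\]
the transcendental constant $\sqrt{\alpha^3/\sqrt5}=\cot(\pi/5)$ being common to both on account of the relation just quoted.

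Next I would multiply the first identity by $\alpha^s$ and the second by $\beta^s$, then take $1/\sqrt5$ times their difference (which gives the Fibonacci sum) and their sum (which gives the Lucas sum). The rational parts combine at once via the Binet formulas: $(\alpha^{s+3}+\beta^{s+3})/5=L_{s+3}/5$ and $(\alpha^{s+3}-\beta^{s+3})/\sqrt5=F_{s+3}$, which explains the leading terms $L_{s+3}/5$ and $F_{s+3}$ in the theorem. The transcendental parts reduce to $\tfrac{2\pi}{25}\bigl(3\alpha^{s+1}-\beta^{s+4}\bigr)\sqrt{\alpha^3/\sqrt5}$ and $\tfrac{2\pi}{5\sqrt5}\bigl(3\alpha^{s+1}+\beta^{s+4}\bigr)\sqrt{\alpha^3/\sqrt5}$; here Lemma~\ref{lem.oakhmtx} with $r=s+1$ turns $3\alpha^{s+1}-\beta^{s+4}$ into $L_{s+2}\sqrt5-L_s$ and $3\alpha^{s+1}+\beta^{s+4}$ into $\sqrt5\bigl(F_{s+2}\sqrt5-F_s\bigr)$, and cancelling the $\sqrt5$'s yields the two asserted identities. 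The companion identities of Lemma~\ref{lem.cpbvwfe}, expressing $\alpha^r\pm\beta^{r+6}$ through $L_{r+3}$ and $F_{r+3}$, serve the same bookkeeping role and may be used instead if one prefers to carry the $\beta$-exponents as $s+6$; either route terminates at the same formulas.

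I do not expect a genuine obstacle: the only analytic input is term-by-term evaluation of~\eqref{eq.r5clsui} at two points inside its disc of convergence. The delicate part is the surd bookkeeping — passing from $(1-z^2)^{3/2}$ to a clean scalar multiple of $\cot(\pi/5)=\sqrt{\alpha^3/\sqrt5}$ without mismatching the fractional powers of $5$ and of $\alpha$, and using $\cot(2\pi/5)=-\beta^3\cot(\pi/5)$ so that the $z=\alpha/2$ and $z=-\beta/2$ series are expressed through one and the same transcendental constant before being added and subtracted. Lemmas~\ref{lem.mg2hc05}, \ref{lem.oakhmtx} and~\ref{lem.cpbvwfe} are tailored to absorb exactly this bookkeeping, so once they are in hand the proof is a short computation.
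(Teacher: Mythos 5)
Your proposal is correct and follows essentially the same route as the paper: evaluate Lehmer's identity \eqref{eq.r5clsui} at $z=\alpha/2$ and $z=-\beta/2$, use Lemma \ref{lem.mg2hc05} to express everything through the single constant $\cot(\pi/5)=\sqrt{\alpha^3/\sqrt5}$, then add and subtract and finish with Lemmas \ref{lem.cpbvwfe} and \ref{lem.oakhmtx}. Your intermediate closed forms for $\sum\alpha^{2j}/\binom{2j}j$ and $\sum\beta^{2j}/\binom{2j}j$ check out, and the only cosmetic difference is that you write the analytic factor as $\cot\theta\,\csc^2\theta$ where the paper writes $z^{-2}\cot^3(\arccos z)$, which are identical.
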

\begin{proof}
Write identity~\eqref{eq.r5clsui} as
\[
\sum_{j = 1}^\infty  {\frac{{2^{2j} z^{2j} }}{\binom{2j}j}}  = \cot ^2 (\arccos z) + \frac{1}{{z^2 }}\arcsin z\cot ^3 (\arccos z).
\]
Set $z=\alpha/2$, $z=-\beta/2$ in this identity to obtain
\begin{gather}
\sum_{j = 1}^\infty  {\frac{{2^{2j} \alpha ^{2j + s} }}{\binom{2j}j}}  = \alpha ^s \cot ^2 (\pi /5) + \frac{{6\pi }}{5}\alpha ^{s - 2} \cot ^3 (\pi /5),\nonumber
\nonumber\\
\sum_{j = 1}^\infty  {\frac{{2^{2j} \beta ^{2j + s} }}{\binom{2j}j}}  = \beta ^s \cot ^2 (2\pi /5) + \frac{{2\pi }}{5}\beta ^{s - 2} \cot ^3 (2\pi /5),\nonumber
\end{gather}
where $s$ is an arbitrary integer.

Using the Binet formulas and identity \eqref{eq.vn4pi2d} we find
\[
\sqrt 5 \sum_{j = 1}^\infty  {\frac{{F_{2j + s} }}{\binom{2j}j}}  = (\alpha ^s  - \beta ^{s + 6} )\cot ^2 \left( {\frac{\pi }{5}} \right) + \frac{{2\pi }}{5}(3\alpha ^{s - 2}  + \beta ^{s + 7} )\cot ^3 \left( {\frac{\pi }{5}} \right),
\]

\[
\sum_{j = 1}^\infty  {\frac{{L_{2j + s} }}{\binom{2j}j}}  = (\alpha ^s  + \beta ^{s + 6} )\cot ^2 \left( {\frac{\pi }{5}} \right) + \frac{{2\pi }}{5}(3\alpha ^{s - 2}  - \beta ^{s + 7} )\cot ^3 \left( {\frac{\pi }{5}} \right).
\]
Since 
\[
\cot ^2 (\pi /5) = \frac{{\alpha ^3 }}{{\sqrt 5 }},\quad\cot ^3 (\pi /5) = \frac{{\alpha ^3 }}{{\sqrt 5 }}\sqrt {\frac{{\alpha ^3 }}{{\sqrt 5 }}}, 
\]
we have
\begin{gather}
\sum_{j = 1}^\infty  {\frac{{F_{2j + s} }}{\binom{2j}j}}  = (\alpha ^s  - \beta ^{s + 6} )\frac{{\alpha ^3 }}{{5}} + \frac{{2\pi }}{{25}}(3\alpha ^{s + 1}  - \beta ^{s + 4} )\sqrt {\frac{{\alpha ^3 }}{{\sqrt 5 }}}\nonumber\\ 
\nonumber\\
\sum_{j = 1}^\infty  {\frac{{L_{2j + s} }}{\binom{2j}j}}  = (\alpha ^s  + \beta ^{s + 6} )\frac{\alpha ^3}{\sqrt 5}  + \frac{{2\pi }}{5\sqrt 5}(3\alpha ^{s + 1}  + \beta ^{s + 4} )\sqrt {\frac{{\alpha ^3 }}{{\sqrt 5 }}}, 
\end{gather}
from which the stated results follow upon using Lemma \ref{lem.cpbvwfe} and Lemma \ref{lem.oakhmtx}.
\end{proof}
\begin{example}
We have
\begin{equation}
\sum_{j = 1}^\infty  {\frac{{F_{2j} }}{\binom{2j}j}}  = \frac{4}{5} + \frac{{2\pi }}{{25}}\frac{{(3\sqrt 5  - 2)}}{{\sqrt 5 }}\sqrt {\alpha ^3 \sqrt 5 },
\end{equation}

\begin{equation}
\sum_{j = 1}^\infty  {\frac{{L_{2j} }}{\binom{2j}j}}  = 2 + \frac{{2\pi }}{5}\sqrt {\alpha ^3 \sqrt 5 },
\end{equation}

\begin{equation}
\sum_{j = 1}^\infty  {\frac{{F_{2j - 1} }}{\binom{2j}j}}  = \frac{3}{5} + \frac{{4\pi }}{{25}}\sqrt {\frac{{\alpha ^5 }}{{\sqrt 5 }}},\quad\sum_{j = 1}^\infty  {\frac{{L_{2j - 1} }}{\binom{2j}j}}  = 1 + \frac{{4\pi }}{5}\sqrt {\frac{\alpha }{{\sqrt 5 }}},
\end{equation}

\begin{equation}
\sum_{j = 1}^\infty  {\frac{{L_{2j - 2} }}{\binom{2j}j}}  = 1 + \frac{{2\pi }}{5}\sqrt {\frac{{\alpha ^3 }}{{\sqrt 5 }}},\quad\sum_{j = 1}^\infty  {\frac{{L_{2j - 3} }}{\binom{2j}j}}  = \frac{{2\pi }}{5}\sqrt {\frac{{ - \beta ^3 }}{{\sqrt 5 }}}.
\end{equation}

\end{example}
\begin{lemma}\label{lem.sju45tu}
Let $r$ be any integer. Then $\alpha^{2r}=1/\beta^{2r}>0$ and
\[
\arctan (\alpha ^{2r} ) + \arctan (\beta ^{2r} ) = \frac{\pi }{2},
\]

\[
\begin{split}
\arctan (\alpha ^{2r} ) - \arctan (\beta ^{2r} ) &= \arctan \left( {\frac{{\alpha ^{2r}  - \beta ^{2r} }}{{1 + \alpha ^{2r} \beta ^{2r} }}} \right)\\
&= \arctan \left( {\frac{{F_{2r} \sqrt 5 }}{2}} \right).
\end{split}
\]
\end{lemma}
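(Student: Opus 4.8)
The plan is to derive all three statements from the two elementary facts $\alpha\beta=-1$ and $\alpha-\beta=\sqrt5$, combined with the standard inverse‑tangent addition formulas. First I would observe that $\alpha^{2r}\beta^{2r}=(\alpha\beta)^{2r}=1$, so $\beta^{2r}=1/\alpha^{2r}$; and since the exponent $2r$ is even, both $\alpha^{2r}$ and $\beta^{2r}$ are positive. This settles the opening claim.

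For the sum I would invoke the classical identity $\arctan x+\arctan(1/x)=\pi/2$, valid for all $x>0$. Applying it with $x=\alpha^{2r}>0$ and replacing $1/\alpha^{2r}$ by $\beta^{2r}$ gives $\arctan(\alpha^{2r})+\arctan(\beta^{2r})=\pi/2$ at once. For the difference I would use $\arctan x-\arctan y=\arctan\bigl(\frac{x-y}{1+xy}\bigr)$, which is valid whenever $xy>-1$; here $xy=\alpha^{2r}\beta^{2r}=1>-1$, so taking $x=\alpha^{2r}$ and $y=\beta^{2r}$ produces the first equality. Since $1+\alpha^{2r}\beta^{2r}=2$ and, by the Binet formula, $\alpha^{2r}-\beta^{2r}=(\alpha-\beta)F_{2r}=\sqrt5\,F_{2r}$, the argument collapses to $F_{2r}\sqrt5/2$, giving the second equality.

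The only point that needs any care---the main obstacle, such as it is---is verifying the hypotheses of the two addition formulas so that their right‑hand sides really are the principal values, with no stray multiple of $\pi$: for the sum this is automatic because $\arctan(\alpha^{2r})$ and $\arctan(\beta^{2r})$ both lie in $(0,\pi/2)$, and for the difference it follows from $xy=1>-1$. Everything else is a single line of algebra with the Binet formulas.
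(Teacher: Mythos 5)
Your proof is correct; the paper states this lemma without proof, and your argument (evenness of the exponent plus $\alpha\beta=-1$ for the first claim, the identity $\arctan x+\arctan(1/x)=\pi/2$ for $x>0$, and the subtraction formula $\arctan x-\arctan y=\arctan\bigl(\frac{x-y}{1+xy}\bigr)$ valid since $xy=1>-1$, finished with Binet's formula $\alpha^{2r}-\beta^{2r}=F_{2r}\sqrt5$) is exactly the standard verification the author leaves implicit. The care you take in checking that the principal values are the right branches is appropriate and complete.
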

\begin{lemma}\label{lem.txhusoe}
For arbitrary $f$ and $g$ and any integer $s$,
\[\alpha^sf+\beta^sg=\frac{L_s}2(f + g) +\frac{F_s\sqrt 5}2(f-g).\]
\end{lemma}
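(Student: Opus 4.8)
The plan is to prove Lemma~\ref{lem.txhusoe} by a direct substitution of the Binet formulas into the right-hand side, since the claimed identity is purely algebraic in $\alpha^s$ and $\beta^s$. Recall from the Binet formulas stated in the introduction that $L_s=\alpha^s+\beta^s$ and $F_s\sqrt5=\alpha^s-\beta^s$ for every integer $s$.

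First I would rewrite the right-hand side, replacing $L_s/2$ by $(\alpha^s+\beta^s)/2$ and $F_s\sqrt5/2$ by $(\alpha^s-\beta^s)/2$, obtaining
\[
\frac{\alpha^s+\beta^s}{2}(f+g)+\frac{\alpha^s-\beta^s}{2}(f-g).
\]
Next I would expand the two products and collect terms: the coefficient of $f$ is $\tfrac12(\alpha^s+\beta^s)+\tfrac12(\alpha^s-\beta^s)=\alpha^s$, and the coefficient of $g$ is $\tfrac12(\alpha^s+\beta^s)-\tfrac12(\alpha^s-\beta^s)=\beta^s$. This yields exactly $\alpha^sf+\beta^sg$, which is the left-hand side, completing the proof.

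There is essentially no obstacle here: the identity is a two-dimensional change of basis, expressing the coordinates $(f,g)$ in the $\{\alpha^s,\beta^s\}$-frame in terms of the $\{L_s,F_s\sqrt5\}$-frame, and the verification is a single line of arithmetic. The only point worth noting is that the formulas $L_s=\alpha^s+\beta^s$ and $F_s=(\alpha^s-\beta^s)/(\alpha-\beta)$ with $\alpha-\beta=\sqrt5$ are valid for all integers $s$, including negative ones (consistent with the conventions $F_{-j}=(-1)^{j-1}F_j$, $L_{-j}=(-1)^jL_j$ recorded earlier), so the identity holds for any integer $s$ as claimed.
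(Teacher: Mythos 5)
Your proof is correct; the paper in fact omits a proof of this lemma entirely, since it is exactly the immediate Binet-formula computation you give (substituting $L_s=\alpha^s+\beta^s$ and $F_s\sqrt5=\alpha^s-\beta^s$ and collecting the coefficients of $f$ and $g$). Nothing more is needed.
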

\begin{theorem}
If $r$ and $s$ are integers, then,
\begin{gather}
\sum_{j = 0}^\infty  {\frac{{2^{2j + 1} }}{(2j + 1)}\frac{1}{\binom{2j}j}\frac{{F_{2rj + s} }}{{L_{2r}^{j + 1} }}}  = \frac{\pi }{2}F_s  + \frac{{L_s }}{{\sqrt 5 }}\arctan \left( {\frac{{F_{2r} \sqrt 5 }}{2}} \right),\label{eq.mbglv58}\\
\nonumber\\
\sum_{j = 0}^\infty  {\frac{{2^{2j + 1} }}{(2j + 1)}\frac{1}{\binom{2j}j}\frac{{L_{2rj + s} }}{{L_{2r}^{j + 1} }}}  = \frac{\pi }{2}L_s  + F_s \sqrt 5 \arctan \left( {\frac{{F_{2r} \sqrt 5 }}{2}} \right)\label{eq.wj0o96c}.
\end{gather}
\end{theorem}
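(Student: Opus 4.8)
The plan is to specialize the Euler arctangent series~\eqref{eq.a64cexn} at $z=\alpha^{2r}$ and $z=\beta^{2r}$ and then combine the two resulting identities by means of the Binet formulas. The crucial observation is that, since $\alpha\beta=-1$ forces $\alpha^{2r}\beta^{2r}=1$, we have
\[
1+(\alpha^{2r})^2 = 1+\alpha^{4r} = \alpha^{2r}\bigl(\alpha^{2r}+\beta^{2r}\bigr) = \alpha^{2r}L_{2r},
\]
so that $z^2/(1+z^2)=\alpha^{2r}/L_{2r}$ when $z=\alpha^{2r}$, and likewise $z^2/(1+z^2)=\beta^{2r}/L_{2r}$ when $z=\beta^{2r}$. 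Both $\alpha^{2r}$ and $\beta^{2r}$ are positive reals for every integer $r$, so~\eqref{eq.a64cexn} applies at these values.

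First I would put $z=\alpha^{2r}$ in~\eqref{eq.a64cexn}, so that $\bigl(z^2/(1+z^2)\bigr)^{j+1}=\alpha^{2r(j+1)}/L_{2r}^{j+1}$, and multiply through by $\alpha^{s-2r}$; this yields
\[
\sum_{j=0}^\infty \frac{2^{2j}}{2j+1}\frac{1}{\binom{2j}{j}}\frac{\alpha^{2rj+s}}{L_{2r}^{j+1}} = \alpha^s\arctan\bigl(\alpha^{2r}\bigr),
\]
and the same step with $z=\beta^{2r}$ and the factor $\beta^{s-2r}$ gives the companion identity with $\alpha$ replaced by $\beta$ everywhere. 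Subtracting the two identities, dividing by $\alpha-\beta=\sqrt5$, and multiplying by $2$, the Binet formula $F_{2rj+s}=(\alpha^{2rj+s}-\beta^{2rj+s})/\sqrt5$ turns the left side into the series of~\eqref{eq.mbglv58}, leaving
\[
\sum_{j=0}^\infty \frac{2^{2j+1}}{2j+1}\frac{1}{\binom{2j}{j}}\frac{F_{2rj+s}}{L_{2r}^{j+1}} = \frac{2}{\sqrt5}\Bigl(\alpha^s\arctan\bigl(\alpha^{2r}\bigr)-\beta^s\arctan\bigl(\beta^{2r}\bigr)\Bigr);
\]
adding them instead (and multiplying by $2$) produces, via $L_{2rj+s}=\alpha^{2rj+s}+\beta^{2rj+s}$,
\[
\sum_{j=0}^\infty \frac{2^{2j+1}}{2j+1}\frac{1}{\binom{2j}{j}}\frac{L_{2rj+s}}{L_{2r}^{j+1}} = 2\Bigl(\alpha^s\arctan\bigl(\alpha^{2r}\bigr)+\beta^s\arctan\bigl(\beta^{2r}\bigr)\Bigr).
\]

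To finish, I would apply Lemma~\ref{lem.txhusoe}: with $f=\arctan(\alpha^{2r})$ and $g=-\arctan(\beta^{2r})$ it rewrites the right-hand side of the Fibonacci identity, and with $f=\arctan(\alpha^{2r})$ and $g=\arctan(\beta^{2r})$ it rewrites that of the Lucas identity, in each case in terms of $\arctan(\alpha^{2r})\pm\arctan(\beta^{2r})$. Lemma~\ref{lem.sju45tu} then evaluates these combinations as $\arctan(\alpha^{2r})+\arctan(\beta^{2r})=\pi/2$ and $\arctan(\alpha^{2r})-\arctan(\beta^{2r})=\arctan\bigl(F_{2r}\sqrt5/2\bigr)$. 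Substituting and simplifying the numerical factors (for instance $\tfrac{2}{\sqrt5}\cdot\tfrac{F_s\sqrt5}{2}\cdot\tfrac{\pi}{2}=\tfrac{\pi}{2}F_s$) gives exactly~\eqref{eq.mbglv58} and~\eqref{eq.wj0o96c}. I do not expect a serious obstacle; the only point requiring care is the identity $1+\alpha^{4r}=\alpha^{2r}L_{2r}$ (and its $\beta$-analogue), which is what makes the substitution $z=\alpha^{2r}$ collapse $z^2/(1+z^2)$ into the shape $\alpha^{2r}/L_{2r}$ demanded by the summand, together with the bookkeeping of the power $\alpha^{s-2r}$ used to shift the exponent from $2r(j+1)$ to $2rj+s$.
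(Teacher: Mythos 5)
Your proposal is correct and follows essentially the same route as the paper: substituting $z=\alpha^{2r}$ and $z=\beta^{2r}$ into \eqref{eq.a64cexn}, using $1+\alpha^{4r}=\alpha^{2r}L_{2r}$ to collapse $z^2/(1+z^2)$ into $\alpha^{2r}/L_{2r}$, shifting the exponent, and then combining via Lemmas \ref{lem.sju45tu} and \ref{lem.txhusoe}. The computations all check out, so there is nothing to add.
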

\begin{proof}
In \eqref{eq.a64cexn}, set $z=\alpha^{2r}$, $z=\beta^{2r}$, in turn. This gives
\begin{gather}
\sum_{j = 0}^\infty  {\frac{{2^{2j} }}{{2j + 1}}\frac{1}{\binom{2j}j}\frac{{\alpha ^{2r(j + 1)} }}{{L_{2r}^{j + 1} }}}  = \alpha ^{2r} \arctan \alpha ^{2r}\nonumber,\\
\nonumber\\
\sum_{j = 0}^\infty  {\frac{{2^{2j} }}{{2j + 1}}\frac{1}{\binom{2j}j}\frac{{\beta ^{2r(j + 1)} }}{{L_{2r}^{j + 1} }}}  = \beta ^{2r} \arctan \beta ^{2r};\nonumber 
\end{gather}
so that
\begin{gather}
\sum_{j = 0}^\infty  {\frac{{2^{2j} }}{{2j + 1}}\frac{1}{\binom{2j}j}\frac{{\alpha ^{2rj + s} }}{{L_{2r}^{j + 1} }}}  = \alpha ^s \arctan \alpha ^{2r}\nonumber,\\
\nonumber 
\sum_{j = 0}^\infty  {\frac{{2^{2j} }}{{2j + 1}}\frac{1}{\binom{2j}j}\frac{{\beta ^{2rj + s} }}{{L_{2r}^{j + 1} }}}  = \beta ^s \arctan \beta ^{2r},\nonumber 
\end{gather}
where $s$ is an arbitrary integer.

Thus,
\begin{gather}
\sqrt 5\sum_{j = 0}^\infty  {\frac{{2^{2j} }}{{2j + 1}}\frac{1}{\binom{2j}j}\frac{{F_{2rj + s} }}{{L_{2r}^{j + 1} }}}  = \alpha ^s \arctan \alpha ^{2r} - \beta ^s \arctan \beta ^{2r}\nonumber,\\
\nonumber \\
\sum_{j = 0}^\infty  {\frac{{2^{2j} }}{{2j + 1}}\frac{1}{\binom{2j}j}\frac{{L_{2rj + s} }}{{L_{2r}^{j + 1} }}}  =\alpha ^s \arctan \alpha ^{2r} + \beta ^s \arctan \beta ^{2r},\nonumber 
\end{gather}
and employing Lemma \ref{lem.sju45tu} and Lemma \ref{lem.txhusoe}, the results stated in the theorem follow.
\end{proof}
In particular, we have
\begin{gather}
\sum_{j = 0}^\infty  {\frac{{2^{2j + 1} F_{2j + s} }}{{(2j + 1)\binom{2j}j3^{j + 1} }}}  = \frac{\pi }{2}F_s  + \frac{{L_s }}{{\sqrt 5 }}\arctan \left( {\frac{{\sqrt 5 }}{2}} \right),\\
\nonumber\\
\sum_{j = 0}^\infty  {\frac{{2^{2j + 1} L_{2j + s} }}{{(2j + 1)\binom{2j}j3^{j + 1} }}}  = \frac{\pi }{2}L_s  + F_s \sqrt 5\arctan \left( {\frac{{\sqrt 5 }}{2}} \right).
\end{gather}
\begin{example}
We have
\begin{equation}
\sum_{j = 0}^\infty  {\frac{{2^{j + 1} }}{(2j + 1)}\frac{1}{\binom{2j}j}}  = \pi,
\end{equation}

\begin{equation}
\sum_{j = 0}^\infty  {\frac{{2^{2j + 1} }}{{(2j + 1)}}\frac{1}{\binom{2j}j}\frac{{F_{2j} }}{{3^{j + 1} }}}  = \frac{2}{{\sqrt 5 }}\arctan \left( {\frac{{\sqrt 5 }}{2}} \right),\quad\sum_{j = 0}^\infty  {\frac{{2^{2j + 1} }}{{(2j + 1)}}\frac{1}{\binom{2j}j}\frac{{L_{2j} }}{{3^{j + 1} }}}  = \pi.
\end{equation}
\end{example}
\begin{theorem}\label{thm.eifjc82}
Let $r$ be an even integer, $n$ a positive integer and $m$ a non-negative integer. Then,
\begin{equation}
\begin{split}
\frac{{( - 1)^n }}{n}\sum_{j = m}^\infty  {\frac{{F_{r(n + j)} }}{{L_r^{n + j} \binom{n + j}j}}}  &= \sum_{j = 1}^{n - 1} {\binom{n - 1}j\frac{{( - 1)^j F_{r(n - j - 1)} }}{{jL_r^{n - j - 1} \binom{m + j}j}}}  - \sum_{j = 1}^m {\binom mj\frac{{( - 1)^j F_{r(n + j - 1)} }}{{jL_r^{n + j - 1} \binom{m + j - 1}j}}}\\ 
&\qquad+ \frac{{F_{r(n - 1)} }}{{L_r^{n - 1} }}\sum_{j = m + 1}^{n - 1} {\frac{1}{j}}  + \frac{1}{{L_r^{n - 1} }}\left( {F_{r(n - 1)} \log L_r  - \frac{{rL_{r(n - 1)} }}{{\sqrt 5 }}\log \alpha } \right),
\end{split}
\end{equation}

\begin{equation}
\begin{split}
\frac{{( - 1)^{n - 1} }}{n}\sum_{j = m}^\infty  {\frac{{L_{r(n + j)} }}{{L_r^{n + j} \binom{n + j}j}}}  &= \sum_{j = 1}^{n - 1} {\binom{n - 1}j\frac{{( - 1)^j L_{r(n - j - 1)} }}{{jL_r^{n - j - 1} \binom{m + j}j}}}  - \sum_{j = 1}^m {\binom mj\frac{{( - 1)^j L_{r(n + j - 1)} }}{{jL_r^{n + j - 1} \binom{m + j - 1}j}}}\\ 
&\qquad+ \frac{{L_{r(n - 1)} }}{{L_r^{n - 1} }}\sum_{j = m + 1}^{n - 1} {\frac{1}{j}}  + \frac{1}{{L_r^{n - 1} }}\left( {L_{r(n - 1)} \log L_r  - rF_{r(n - 1)}\sqrt 5 \log \alpha } \right).
\end{split}
\end{equation}
\end{theorem}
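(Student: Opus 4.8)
The plan is to imitate the pattern used throughout the preceding proofs: specialize a ``master'' identity at $z=\alpha^{(\cdot)}$ and $z=\beta^{(\cdot)}$, then combine. Here the master identity is the Sury et al.\ formula \eqref{eq.icn6htq}, and the right specializations are $z=\alpha^r/L_r$ and $z=\beta^r/L_r$, where $L_r=\alpha^r+\beta^r$. The first point to settle is admissibility: for even $r$ we have $\beta^r=\alpha^{-r}>0$, so that $0<\alpha^r/L_r<1$ and $0<\beta^r/L_r<1$, and \eqref{eq.icn6htq} applies to each choice of $z$. With $z=\alpha^r/L_r$ the left-hand side of \eqref{eq.icn6htq} is exactly $\sum_{j=m}^\infty \alpha^{r(n+j)}/(L_r^{n+j}\binom{n+j}{j})$, and analogously for $\beta$; hence subtracting the $\beta$-equation from the $\alpha$-equation and dividing by $\sqrt5$ produces on the left the Fibonacci sum of the theorem (via $F_k=(\alpha^k-\beta^k)/\sqrt5$), while adding the two equations produces the Lucas sum (via $L_k=\alpha^k+\beta^k$).

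The right-hand side is then a matter of simplification. The crucial algebraic facts are $\alpha^r-L_r=-\beta^r$ and $\beta^r-L_r=-\alpha^r$, so that $z-1=-\beta^r/L_r$ in the first case and $z-1=-\alpha^r/L_r$ in the second; thus each power $(z-1)^k$ contributes $(-1)^k\beta^{rk}/L_r^k$, resp.\ $(-1)^k\alpha^{rk}/L_r^k$. Forming the appropriate difference or sum of these and collapsing with the Binet formulas turns every finite sum appearing in \eqref{eq.icn6htq} into the corresponding finite sum in the theorem, the remaining bookkeeping being handled by multiplying the Fibonacci equation through by $(-1)^n/n$ and the Lucas equation by $(-1)^{n-1}/n$; this converts the accumulated powers of $-1$ into the displayed $(-1)^j$'s and into the stated prefactors.

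I expect the one genuinely delicate step to be the logarithm term $n(z-1)^{n-1}\log(1/(1-z))$. Because $r$ is even, $1-z$ is a positive real and $\log(1/(1-z))$ splits cleanly: for $z=\alpha^r/L_r$ one has $1/(1-z)=L_r/\beta^r=\alpha^r L_r$, giving $\log L_r+r\log\alpha$, while for $z=\beta^r/L_r$ one has $1/(1-z)=L_r/\alpha^r=\beta^r L_r$, giving $\log L_r-r\log\alpha$. The remaining work is to combine $\beta^{r(n-1)}(\log L_r+r\log\alpha)\mp\alpha^{r(n-1)}(\log L_r-r\log\alpha)$: the coefficient of $\log L_r$ collapses, via Binet, to $-F_{r(n-1)}$ (for the difference, after dividing by $\sqrt5$) or to $L_{r(n-1)}$ (for the sum), while the coefficient of $r\log\alpha$ collapses to $L_{r(n-1)}/\sqrt5$ or to $-F_{r(n-1)}\sqrt5$ respectively. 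After multiplication by $(-1)^n/n$ (resp.\ $(-1)^{n-1}/n$) this is precisely the term $\tfrac{1}{L_r^{n-1}}\bigl(F_{r(n-1)}\log L_r-\tfrac{rL_{r(n-1)}}{\sqrt5}\log\alpha\bigr)$ (resp.\ $\tfrac{1}{L_r^{n-1}}\bigl(L_{r(n-1)}\log L_r-rF_{r(n-1)}\sqrt5\log\alpha\bigr)$) appearing in the theorem. Everything else reduces to routine manipulation with the Binet formulas.
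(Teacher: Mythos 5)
Your proposal coincides with the paper's proof: the paper likewise substitutes $z=\alpha^r/L_r$ and $z=\beta^r/L_r$ into \eqref{eq.icn6htq} (both admissible since $r$ even makes each lie in $(0,1)$) and then adds and subtracts the resulting identities. Your explicit handling of $z-1=-\beta^r/L_r$ (resp.\ $-\alpha^r/L_r$), of $\log(1/(1-z))=\log L_r\pm r\log\alpha$, and of the sign bookkeeping after multiplying by $(-1)^n/n$ (resp.\ $(-1)^{n-1}/n$) is correct and simply supplies the computations the paper leaves implicit.
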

\begin{proof}
If $r$ is an even integer, then, obviously $\alpha^r/L_r<1$, $\beta^r/L_r<1$. Set $z=\alpha^r/L_r$, $z=\beta^r/L_r$, in turn, in \eqref{eq.icn6htq}. Addition and subtraction of the resulting identities yield the results stated in the theorem.
\end{proof}



\hrule

\bigskip

\bigskip
\noindent Concerned with sequences: A000032, A000045, A000129, A001045, A001582, A002450, A014551

\bigskip
\hrule
\bigskip

\vspace*{+.1in}
\noindent



\end{document}